\newcommand{\A}{\mathbb{A}}
\newcommand{\Z}{\mathbb{Z}}
\newcommand{\BP}{\mathbb{P}}
\newcommand{\Q}{\mathbb{Q}}
\newcommand{\G}{\mathbb{G}}
\newcommand{\red}{{\rm red}}
\newcommand{\Bk}{\operatorname{Bk}}
\newcommand{\Supp}{\operatorname{Supp}}
\newcommand{\ch}{\operatorname{char}}
\newcommand{\SO}{\mathcal{O}}
\newcommand{\SL}{\mathcal{L}}
\newcommand{\ol}[1]{\overline{#1}}
\newcommand{\lkd}[1]{\ol{\kappa}({#1})}
\newtheorem{thm}{Theorem}[section]
\newtheorem{prop}[thm]{Proposition}
\newtheorem{lem}[thm]{Lemma}
\newtheorem{defn}[thm]{Definition}
\newtheorem{example}[thm]{Example}
\newtheorem{remark}[thm]{Remark}
\newtheorem*{question}{Question}
\begin{document}
\title[Smooth affine surfaces of $\ol{\kappa} = 1$]{Logarithmic multicanonical systems of smooth affine surfaces of logarithmic Kodaira dimension one }
\author{Hideo Kojima}
\address[H. Kojima]{Department of Mathematics, Faculty of Science, Niigata University, 8050 Ikarashininocho, Nishi-ku, Niigata 950-2181, Japan}
\email{kojima@math.sc.niigata-u.ac.jp}
\date{5 April 2024}
\subjclass[2020]{Primary 14J26; Secondary 14R20}
\thanks{The author is supported by JSPS KAKENHI Grant Number JP 21K03200.}
\begin{abstract}
Let $S$ be a smooth affine surface of logarithmic Kodaira dimension one and let $(V,D)$ be a pair of a smooth projective surface $V$ and a simple normal crossing divisor $D$ on $V$ such that $V \setminus \Supp D = S$. In this paper, we consider the logarithmic multicanonical system $|m(K_V + D)|$. We prove that, for any $m \geq 8$, $|m(K_V+D)|$ gives an $\BP^1$-fibration form $V$ onto a smooth projective curve. 
\end{abstract}
\maketitle

\setcounter{section}{0}

\section{Introduction}

We work over an algebraically closed field $k$ of characteristic $\geq 0$.

Let $S$ be a smooth open algebraic surface of logarithmic Kodaira dimension one and let $(V,D)$ be a pair of a smooth projective surface $V$ and a simple normal crossing divisor $D$ on $V$ such that $S = V \setminus \Supp D$. Here we call the pair $(V,D)$ an SNC-completion of $S$. It then follows from \cite{Kaw78} in the case $\ch (k) = 0$ and \cite[Theorem 2.1]{K13} in the general case that, for a sufficiently large $n$, the complete linear system $|n(K_V+D)|$ defines a fibration $\varphi: V \to B$ onto a smooth projective curve $B$ such that $\varphi$ is an elliptic fibration, a quasi-elliptic fibration or a $\BP^1$-fibration. 
In this article, we consider the following question, which is an open surface version of the question considered in Katsura--Ueno \cite{KU85}, Katsura \cite{Katsura95}, \cite{Katsura18}, Katsura--Saito \cite{KS21} (see also Iitaka \cite{I70}).

\begin{question}
Let the notations and assumptions be the same as above.
\begin{itemize}
\item[(1)]
Does there exist a positive integer $M$ such that, if $m \geq M$, then the logarithmic multicanonical system $|m(K_V+D)|$ gives a structure of an elliptic fibration, a quasi-elliptic fibration or a $\BP^1$-fibration for any open algebraic surface $S$ of $\lkd{S}  = 1$ with an SNC-completion $(V,D)$?
\item[(2)]
What is the smallest $M$ which satisfies the property of {\rm (1)}?
\end{itemize}
\end{question}

In the present article, we prove that, for the smooth affine surfaces of logarithmic Kodaira dimension one, $M = 8$ and $8$ is best possible. 
The following result is the main theorem of this article.

\begin{thm}
Let $S$ be a smooth affine surface of logarithmic Kodaira dimension one and let $(V,D)$ be its SNC-completion. 
Let $(K_V+D)^{+}$ be the nef part of the Zariski decomposition of $K_V+D$. 
Then, for every integer $m \geq 8$, $| \lfloor m (K_V+D)^+ \rfloor | $ induces a $\BP^1$-fibration onto a smooth projective curve. Furthermore, the number $8$ is best possible.
\end{thm}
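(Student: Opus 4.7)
The plan is to exploit the structure of the log-Iitaka fibration on $(V,D)$, compute the Zariski decomposition of $K_V+D$ explicitly in terms of that fibration, bound the denominators appearing in the nef part, and finally produce an example witnessing sharpness.

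First I would show that the fibration $\varphi\colon V\to B$ furnished by $|n(K_V+D)|$ for sufficiently large $n$ (existence by the cited results of Kawamata and Kojima) must be a $\BP^1$-fibration when $S$ is affine. Indeed, a general fiber $F$ of $\varphi$ satisfies $F\cdot(K_V+D)=0$; if $F$ were disjoint from $D$ then $F\subset S$ would contradict affineness of $S$ since $F$ is complete, while if $F$ meets $\Supp D$ then $F\cdot D>0$ forces $F\cdot K_V<0$, ruling out elliptic and quasi-elliptic fibers.

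Next I would analyze $D$ relative to $\varphi$: the horizontal part $D^h$ (one or two sections of $\varphi$, or a $2$-section) and the vertical part $D^v$ (contained in reducible or non-reduced fibers). Combining the canonical bundle formula for $\BP^1$-fibrations with log adjunction, one obtains a formula of the form
$$(K_V+D)^{+}\equiv aF+\sum_{i=1}^{r} c_i F_i^{\red},$$
where $F$ is a general fiber, $F_i^{\red}$ is the reduced support of a multiple fiber of multiplicity $n_i$, and $a,c_i\in\Q_{>0}$; the coefficients $c_i=(n_i-1)/n_i$ arise as the standard correction at multiple fibers, and $a$ absorbs the contributions of $g(B)$ and of $D^h$. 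The negative part of the Zariski decomposition is then supported on the vertical components of $D$ together with any $(-1)$- or $(-2)$-configurations one picks up in singular fibers, and can be computed by the usual Matsuki-type algorithm.

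With this decomposition in hand, I would determine the smallest $m$ for which $|\lfloor m(K_V+D)^{+}\rfloor|$ induces $\varphi$: one needs the floor to have non-empty linear system, base locus contained in fibers, and positive intersection with a general fiber. The denominators $n_i$ of multiple fibers control which $m$ is admissible, and affineness gives an \emph{a priori} bound on the multi-set $\{n_i\}$ by bounding $\sum_i (1-1/n_i)$ in terms of data from $D^h$ and $g(B)$, reducing to finitely many numerical types. The value $m=8$ should then appear as the smallest integer past which every admissible configuration yields the fibration; to show sharpness I would exhibit a concrete smooth affine $S$ with $\lkd{S}=1$ whose decomposition makes $|\lfloor 7(K_V+D)^{+}\rfloor|$ fail to induce $\varphi$ (most naturally arising from a multiple fiber of multiplicity $8$, or from a mixture such as $(2,3,8)$ whose rounding thresholds accumulate). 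The hardest part will be this numerical enumeration: the decomposition depends on several parameters (genus of $B$, multiplicity profile of multiple fibers, configuration of horizontal components of $D$), and careful case analysis—possibly via minimal-model reductions applied to the singular fibers—will be needed to exclude every type forcing $m\geq 9$ and to confirm that $m=8$ always suffices.
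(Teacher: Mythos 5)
Your overall strategy---reduce to the $\BP^1$-fibration $\varphi$, express $(K_V+D)^+$ via a canonical bundle formula over $B$ with fractional coefficients at the ``multiple'' fibers, and then run a numerical case analysis on $g(B)$, the horizontal part and the coefficients---is the same as the paper's, and your argument that $\varphi$ must be a $\BP^1$-fibration (a complete fiber cannot avoid $D$, and $F\cdot D>0$ forces $F\cdot K_V<0$) is exactly the one used. But there is a genuine gap at the heart of the plan: the purely numerical enumeration you describe does \emph{not} close at $m=8$. Your only constraint coming from affineness is the inequality $\deg(K_B+\delta)+\sum_i d_i>0$, and under that constraint alone there are admissible numerical types that fail for arbitrarily large sporadic $m$. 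For example, with $g(B)=0$, two disjoint sections (so $t=0$) and coefficients $d=(\tfrac23,\tfrac12,\tfrac12,\tfrac12)$ one has $\sum d_i=\tfrac{13}{6}>2$, yet $\deg\delta_9=-18+6+3\cdot 4=0$, so $m=9$ would fail; likewise your suggested $(2,3,8)$ profile fails at $m=8$ itself. These types are eliminated only by a \emph{geometric} input that your plan does not capture: since $S$ is affine, $\Supp D$ is connected, and the structure theorem (Theorem 2.7\,(4) in the paper) then forces $d_i=1$ for some $i$ when the two sections are disjoint, and $d_i=\tfrac12$ (not merely $\leq\tfrac12\cdot$something) at fibers meeting the horizontal part in one point. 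Without these constraints the extremal analysis is simply wrong. Relatedly, you need the reduction to an almost/strongly minimal model before the Zariski decomposition takes the clean form $K_V+D^{\#}$ with $\Bk(D)$ as negative part; ``the usual algorithm'' is the peeling construction, and invoking it is where $h^0(\lfloor m(K_V+D)^+\rfloor)$ becomes computable.

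Two smaller points. First, the coefficient dichotomy is $d_i=1-\tfrac1{m_i}$ or $d_i=\tfrac12\bigl(1-\tfrac1{m_i}\bigr)$ according as the fiber meets the horizontal part in two points or one; your uniform $c_i=(n_i-1)/n_i$ misses the halved case, which governs the twisted $\A^1_*$- and $\A^1$-fibration cases. Second, the sharp example is not produced by a multiple fiber of multiplicity $8$: the extremal configuration is $g(B)=0$ with $K_W+C\sim_{\Q}\tfrac16 F$ coming from denominators $2$ and $3$ (in the paper, $d=(1,\tfrac23,\tfrac12)$ on $\BP^1\times\BP^1$ with two disjoint sections), and $m=7$ fails because $-7+\lfloor\tfrac{14}{3}\rfloor+\lfloor\tfrac72\rfloor=0$. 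You would also need to verify that such a configuration is actually realized by a smooth \emph{affine} surface, which the paper does via a hyperbolic $\G_m$-action.
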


In the setting of Theorem 1.1, if $|\lfloor m(K_V + D)^+ \rfloor |$ induces a $\BP^1$-fibration, then so does $|\lfloor m(K_{\tilde{V}} + \tilde{D})^+ \rfloor |$ for any SNC-completion $(\tilde{V}, \tilde{D})$ of $S$. 
Let $F$ be a fiber of the $\BP^1$-fibration. Then $F^2 = F \cdot (K_V + D) = 0$. So $F \cdot D = 2$. 

In Section 2, we summarize some results on smooth affine surfaces. In particular, we give a structure theorem for smooth affine surfaces of $\ol{\kappa} = 1$ (see Theorem 2.7), which is the key result in the proof of Theorem 1.1. In Section 3, we prove Theorem 1.1. We first prove that, for every integer $m \geq 8$, $| \lfloor m (K_V+D)^+ \rfloor | $ induces a $\BP^1$-fibration onto a smooth projective curve. Then we construct a smooth affine surface $S'$ of $\lkd{S'} = 1$ such that $|\lfloor 7(K_{V'}+ D')^+ \rfloor |$, where $(V', D')$ is an SNC-completion of $S'$, does not induce a $\BP^1$-fibration. Finally, in Section 4, we give some remarks for special smooth affine surfaces of $\ol{\kappa} = 1$.

\section{Preliminaries}

We employ the following notations. For the definitions of the logarithmic $m$-genus and the logarithmic Kodaira dimension of a smooth algebraic variety, see \cite{OAS}.
\smallskip

$\lkd{S}$: the logarithmic Kodaira dimension of $S$.

$\ol{P}_m(S)$: the logarithmic $m$-genus of $S$.

$\mu^*(D)$: the total transform of $D$ by $\mu$. 

$\mu_*^{-1}(D)$: the proper transform of $D$ by $\mu$. 

$\lfloor Q \rfloor$: the integral part of a $\Q$-divisor $Q$.

$\lceil Q \rceil:= - \lfloor -Q \rfloor$: the roundup of a $\Q$-divisor $Q$.

\medskip

From now on, we recall the structure theorem for open algebraic surfaces of logarithmic Kodaira dimension one in \cite[Sections 1 and 2]{K13}. Since we consider smooth affine surfaces only, some of the notions in \cite[Sections 1 and 2]{K13} are not necessary. So we shorten the arguments and results there. 

Let $T$ be an SNC-divisor on $V$. Then $T$ is called a {\em chain} if its dual graph is linear, and in writing it as a sum of irreducible components $T = T_1 + \cdots + T_r$, we assume that $T_i \cdot T_{i+1} = 1$ for $1 \leq i \leq r-1$. We define the type of the chain $T$ as the sequence $[- T_1^2, - T_2^2, \ldots, -T_r^2]$. We often write as $T = [- T_1^2, - T_2^2, \ldots, -T_r^2]$.

We recall some basic notions in the theory of peeling. For more details, see \cite[Chapter 2]{OAS} or \cite[\S 1]{MT84}. 
Let $(V,D)$ be a pair of a smooth projective surface $V$ and an SNC-divisor $D$ on $V$. We call such a pair $(V,D)$ an {\em SNC-pair}. 
A connected curve in $D$ means a connected curve consisting of irreducible components of $D$. A connected curve $T$ in $D$ is a {\em twig} if the dual graph of $T$ is a chain and $T$ meets $D-T$ in a single point at one of the end components of $T$, the other end of $T$ is called the {\em tip} of $T$. 
A connected curve $E$ in $D$ is {\em rational} (resp.\ {\em admissible}) if it consists only of rational curves (resp.\ if there are no $(-1)$-curves in $\Supp E$ and the intersection matrix of $E$ is negative definite). An admissible rational twig in $D$ is {\em maximal} if it is not extended to an admissible rational twig with more irreducible components of $D$. 

From now on, we assume that the surface $S:= V \setminus \Supp D$ is affine. 
Then $S$ contains no complete algebraic curves, $\Supp D$ is connected, and the divisor $D$ is big. So the divisor $D$ does not contain neither an admissible rational rod nor an admissible rational fork (see \cite[Chapter 2]{OAS} or \cite[Section 1]{K13} for the definitions of a rod and a fork). Furthermore, in the process of construction of a strongly minimal model of $(V,D)$ given in \cite[Sections 1 and 2]{K13}, neither an admissible rational rod nor an admissible rational fork does not appear. 
We note that every maximal rational twig in $D$ is admissible provided $\lkd{S} \geq 0$. 

A $(-1)$-curve $E$ on $V$ is called a {\em superfluous exceptional component} of $D$ if $E \subset \Supp D$, $E \cdot (D-E) \leq 2$ and the equality holds if and only if $E$ meets two irreducible components of $D - E$. If $E$ is an superfluous exponential component of $D$, let $f: V \to V'$ be the contraction of $E$ and set $D' := f_*(D)$. Then $V' \setminus \Supp D' = V\setminus \Supp D$ and $D'$ is an SNC-divisor. 
So when we consider an SNC-pair $(V,D)$, we may assume that $D$ has no superfluous exponential components. 

Let $\{ T_{\lambda} \}$ be the set of all maximal admissible rational twigs. Then there exists a unique decomposition of $D$ as a sum of effective $\Q$-divisors $D= D^{\#} + \Bk(D)$ such that the following two conditions (i) and (ii) are satisfied:
\begin{enumerate}
\item[(i)]
$\Supp (\Bk(D)) =  \cup_{\lambda} T_{\lambda}$.
\item[(ii)]
$(K_V + D^{\#}) \cdot Z = 0$ for every irreducible component $Z$ of $\Supp (\Bk(D))$.
\end{enumerate}
The $\Q$-divisor $\Bk(D)$ is called the {\em bark} of $D$. We note that $\Supp (\Bk(D)) = \Supp (D - \lfloor D^{\#} \rfloor)$. Furthermore, $\lceil D^{\#} \rceil = D$ since $S$ is affine. 

\begin{defn}
{\rm Let $(V,D)$ be the same as above, where $S = V \setminus \Supp D$ is affine. Then $(V,D)$ is said to be {\em almost minimal} if, for every irreducible curve $U$ on $V$, either $U \cdot (K_V + D^{\#}) \geq 0$ or $U \cdot (K_V + D^{\#}) < 0$ and the intersection matrix of $U+\Bk(D)$ is not negative definite.}
\end{defn}

\begin{lem}
With the same notations and assumptions as in Definition {\rm 2.1}, assume that $(V,D)$ is not almost minimal and there exist no superfluous exponential components of $D$. Then the following assertions hold true{\rm :}
\begin{itemize}
\item[{\rm (1)}]
There exists a $(-1)$-curve $U$ such that $U \not\subset \Supp D$, $U \cdot D = 1$, $U$ meets only one maximal admissible rational twig, say $T_1$ of $D$. In particular, $V \setminus \Supp (U + D)$ is an affine open subset of $S$. 
\item[{\rm (2)}]
Let $f: V \to \ol{V}$ be the composite of the contraction of $U$ and the contractions of all subsequent contractible components of $\Supp  T_1$ and let $\ol{D}:= f_*(D)$. Then $\ol{D}$ is an SNC-divisor. 
\item[{\rm (3)}]
For any positive integer $n$, $\ol{P}_n (\ol{V}\setminus \Supp \ol{D}) = \ol{P}_n(S)$. In particular, $\lkd{\ol{V}\setminus \Supp \ol{D}} = \lkd{S}$. 
\end{itemize}
\end{lem}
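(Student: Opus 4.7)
The plan is to extract the $(-1)$-curve $U$ directly from the failure of almost minimality and then analyze its geometry using the negative-definiteness of $U + \Bk(D)$. Since $(V,D)$ is not almost minimal, by Definition 2.1 there is an irreducible curve $U$ with $U \cdot (K_V + D^{\#}) < 0$ and with $U + \Bk(D)$ having negative-definite intersection matrix. First, $U$ cannot be a component of $\Bk(D)$, since condition (ii) in the definition of the bark would then give $U \cdot (K_V + D^{\#}) = 0$. The negative-definiteness forces $U^2 < 0$.

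For part (1), I would first show $U \not\subset \Supp D$: if $U$ lay in $\Supp D$ but outside the twigs, the inequality $U \cdot (K_V + D^{\#}) < 0$ combined with the hypothesis that $D$ has no superfluous exceptional components leads to a contradiction via a case analysis using the bark coefficients and the shape of $D$ near $U$. Once $U \not\subset \Supp D$, effectivity of $D^{\#}$ gives $U \cdot D^{\#} \geq 0$, so $U \cdot K_V < 0$; combined with $U^2 < 0$ and adjunction, $U$ is a smooth rational $(-1)$-curve. The bound $U \cdot D^{\#} < -U \cdot K_V = 1$, combined with the fact that $D^{\#}$ has coefficient $1$ on every non-twig component of $D$, forces every intersection of $U$ with $D$ to occur on a twig. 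The negative-definiteness of $U + \Bk(D)$ then forces $U$ to meet exactly one maximal admissible rational twig $T_1$, at its tip, transversally, so $U \cdot D = 1$. Since $S$ is affine and $U \setminus (U \cap D) \cong \A^1$, the complement $V \setminus \Supp(U + D)$ is affine.

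For part (2), the contraction of $U$ preserves SNC since $U$ meets $D$ transversally at a single point; the image of the tip $T_1^{(1)}$ of $T_1$ gains $+1$ in self-intersection. If the image is a $(-1)$-curve I contract it, and since it is still the tip of (the image of) $T_1$, it meets the rest of the image of $D$ transversally at a single point, so SNC is preserved. Iterating until the current tip is no longer contractible gives the morphism $f\colon V \to \bar V$ with $\bar D = f_*(D)$ SNC.

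For part (3), by induction on the length of the blow-down sequence one derives $K_V + D = f^*(K_{\bar V} + \bar D) + E$ with $E$ an effective divisor supported on the exceptional locus of $f$; the base case (a single contraction of a $(-1)$-curve $C$ meeting the current divisor with multiplicity $1$) is a direct intersection-number computation. Since $nE$ is effective and $f$-exceptional, $f_* \SO_V(nE) = \SO_{\bar V}$, and the projection formula gives $H^0(V, n(K_V + D)) = H^0(\bar V, n(K_{\bar V} + \bar D))$, that is, $\ol{P}_n(S) = \ol{P}_n(\bar V \setminus \Supp \bar D)$; passing to the supremum in $n$ yields the equality of logarithmic Kodaira dimensions. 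The main obstacle is part (1): both ruling out $U \subset \Supp D$ and pinning down that $U$ meets exactly one twig, at its tip, with multiplicity $1$, require a careful combinatorial and numerical analysis of the bark coefficients and the geometry of the maximal admissible rational twigs.
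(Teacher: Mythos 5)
Your overall architecture agrees with the paper's (extract $U$ from the failure of almost minimality, note $U^2<0$, split into the cases $U\subset \Supp D$ and $U\not\subset\Supp D$, then deduce (2) and (3) from (1)), and your treatment of (2) and (3) and of the easy reductions in (1) (adjunction forcing $U$ to be a $(-1)$-curve, the bound $U\cdot D^{\#}<1$ forcing all intersections of $U$ with $D$ to lie on maximal admissible rational twigs) is fine. But the two steps you yourself flag as ``the main obstacle'' are precisely where the content of part (1) lives, and you give no argument for either; as written these are genuine gaps. For the exclusion of $U\subset\Supp D$, the paper's argument is short but not automatic: since $U$ is not in $\Supp(\Bk(D))$ its coefficient in $D^{\#}$ is $1$, adjunction makes $U$ a smooth rational curve, and the inequality $\sum_i\alpha_i(U\cdot Z_i')<2$ (with $\alpha_i\geq\frac12$ for every neighbour $Z_i'$) gives at most three neighbours; if $U^2\leq -2$ then $D$ would be $U$ plus at most three admissible rational twigs, hence negative definite, contradicting the affineness of $S$ (this is where affineness is used), so $U^2=-1$ and $U$ is a superfluous exceptional component, contradicting the hypothesis. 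None of this is in your sketch.

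For the exclusion of the two-twig case, your claim that ``the negative-definiteness of $U+\Bk(D)$ then forces $U$ to meet exactly one maximal admissible rational twig, at its tip, transversally'' is an assertion, not a proof, and it is not how the paper proceeds: the paper cites \cite[Lemma 2.3.6.3]{OAS} only for $U\cdot Z_i=1$, $n\leq 2$ and $Z_1\cdot Z_2=0$, and then kills $n=2$ by contracting $U+T_1+T_2$ and showing (via \cite[Lemma 2.3.7.1]{OAS}) that the image would either be an admissible rational twig meeting the rest of $D$ in two points, or would force the components $D_1,D_2$ of $D-(T_1+T_2)$ attached to the twigs to acquire extra intersection or a singular point --- a contradiction that again uses the connectedness and bigness of $D$. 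A purely numerical route does exist (if $\gamma_i$ denotes the diagonal entry $-(M_{T_i}^{-1})_{Z_iZ_i}$, negative-definiteness of $U+T_1+T_2$ forces $\gamma_1+\gamma_2<1$, while $\gamma_i\geq 1-\alpha_i$ and $U\cdot(K_V+D^{\#})<0$ force $(1-\alpha_1)+(1-\alpha_2)>1$), but that computation has to be carried out and is absent from your proposal. Finally, ``at its tip'' is neither asserted in the lemma nor true in general: for instance $U$ can meet the middle $(-3)$-component of a twig of type $[2,3,2]$, with $U+T_1$ negative definite and $U\cdot(K_V+D^{\#})=-\frac{2}{7}<0$; your argument for part (2) should therefore not rely on $U$ meeting the tip, which is another reason the paper simply invokes \cite[Lemma 2.3.7.1]{OAS} there.
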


\begin{proof}
The assertions follow from the arguments in \cite[Capter 2, Section 3]{OAS}. For the reader's convenience, we give an outline of its proof. 
By the assumption, there exists an irreducible curve $U$ such that $U \cdot (K_V + D^{\#}) < 0$ and the intersection matrix of $U + \Bk(D)$ is negative definite. It is clear that $U^2 < 0$. From now on, we prove the assertion (1). The assertion (2) follows from the assertion (1) and  \cite[(2) of Lemma 2.3.7.1 (p.\ 97)]{OAS}. The assertion (3) is clear from the construction of $f$ and $U \cdot D = 1$. 

Assume that $U$ is not contained in $\Supp D$. Then $U \cdot D > 0$ since $S$ is affine and hence $U \cdot K_V < 0$. In particular, $U$ is a $(-1)$-curve. Let $Z_1, \ldots, Z_n$ be all irreducible components of $D$ meeting $U$. By \cite[Lemma 2.3.6.3 (p.\ 96)]{OAS}, we know that $U \cdot Z_i = 1$  ($i =1, \ldots, n$), $n \leq 2$ and $Z_1 \cdot Z_2 = 0$ if $n = 2$. The coefficient of $Z_i$ ($i =1, \ldots, n$) in $D^{\#}$ is less than one since $U \cdot (K_V + D^{\#}) < 0$. If $n = 1$, then the assertion follows. 

We consider the case $n = 2$ and let $T_i$ ($i = 1,2$) be the maximal admissible rational twig in $D$ containing $Z_i$.
Let $g: V \to V'$ be the composite of the contraction of $U$ and the contraction of all subsequence contractible components of $T_1 + T_2$ and set $D' = g_*(D)$. It follows from \cite[(2) of Lemma 2.3.7.1 (p.\ 97)]{OAS} that the divisor $D'$ is an SNC-divisor. By  \cite[(3) of Lemma 2.3.7.1 (p.\ 97)]{OAS}, we know that, if $g_*(U + T_1 + T_2) \not= 0$, then $g_*(U + T_1 + T_2)$ is an admissible rational twig in $D'$, which is a contradiction. So $g_*(U+T_1+T_2) = 0$. Let $D_1$ and $D_2$ be the irreducible component of $\Supp (D-(T_1 + T_2)) $ meeting $T_1$ and $T_2$, respectively. 
As seen from the proof of \cite[(2) of Lemma 2.3.7.1 (p.\ 97)]{OAS} (see its first paragraph), we know that, if $D_1 = D_2$ (resp.\ $D_1 \not= D_2$), then $g_*(D_1)$ is smooth (resp.\ $g_*(D_1) \cdot g_*(D_2) = D_1 \cdot D_2)$. This contradicts $g_*(U +T_1+T_2) = 0$. 
Therefore, the case $n = 2$ does not take place. 

Assume that $U$ is a component of $\Supp D$. Then $U$ is not an irreducible component of $\Supp (\Bk(D))$ and so the coefficient of $U$ in $D^{\#}$ equals one. Since $U \cdot (K_V + U) \leq U \cdot (K_V + D^{\#}) < 0$, $U$ is a smooth rational curve. 
Let $Z'_1, \ldots, Z'_n$ be all irreducible components of $D-U$ meeting $U$. Since the coefficient of $U$ in $D^{\#}$ equals one, the coefficient $\alpha_i$ of $Z'_i$ in $D^{\#}$ $\geq \frac{1}{2}$. Further, if $U \cdot Z'_i \geq 2$, then $\alpha_i = 1$. Since $U \cdot (K_V + D^{\#} ) < 0$ and $U \cdot (K_V + U) = -2$, we have
$$
\sum_{i=1}^n \alpha_i (U \cdot Z'_i) < 2.
$$
So $n \leq 3$. Assume that $U^2 \leq -2$. If $n = 3$, then $\alpha_1, \alpha_2, \alpha_3 < 1$ and hence $\Supp D$ consists of $U$ and three admissible rational twigs. This is a contradiction because the intersection matrix of $D$ is then negative definite. If $n = 2$ or $n=1$, then $U$ is a component of an admissible rational twig in $D$, a contradiction. Therefore, $U^2 = -1$. As seen from the argument as above, we know that $U$ is a superfluous exponential component, a contradiction. 

The first assertion of (1) is thus verified. The other assertions are clear from the argument as above.
\end{proof}

Let $S$ be a smooth affine surface and $(V,D)$ its SNC-completion. Then an almost minimal model $(\tilde{V}, \tilde{D})$ of $(V,D)$ together with a birational morphism $\mu: V \to \tilde{V}$ is obtained as a composite of the following operations (cf.\ \cite[Chapter 2, 3.11 (p.\ 107)]{OAS}):
\begin{itemize}
\item[(1)]
Contract all possible superfluous exceptional components of $D$.
\item[(2)]
If there is no superfluous exceptional component of $D$, then calculate $D^{\#}$ and $\Bk(D) = D-D^{\#}$. 
\item[(3)]
Find a $(-1)$-curve $U$ such that $U \not\subset \Supp D$, $U \cdot (K_V + D^{\#}) < 0$ and the intersection matrix of $U + \Bk(D)$ is negative definite. If there is none, then we are done. If there is one, consider the birational morphism $f: V \to \ol{V}$ explained in Lemma 2.2 (2) and set $\ol{D} = f_*(D)$.
\item[(4)]
Now repeat the operations (1), (2) and (3) all over again. 
\end{itemize}

Thus, we obtain the following lemma. See \cite[Theorem 2.3.11.1 (p.\ 107)]{OAS} for general open algebraic surfaces. 

\begin{lem}
Let $S$ be a smooth affine surface and let $(V,D)$ be an SNC-completion of $S$. Then there exists a birational morphism $\mu : V \to \tilde{V}$ onto a smooth projective surface $\tilde{V}$ such that the following conditions {\rm (1) $\sim$ (4)} are satisfied{\rm :}
\begin{enumerate}
\item[{\rm (1)}]
$\tilde{D}:= \mu_*(D)$ is an SNC-divisor and $\Supp \tilde{D}$ is connected. 
\item[{\rm (2)}]
The surface $\tilde{S} = \tilde{V} \setminus \Supp \tilde{D}$ is an affine open subset of $S$. If $\tilde{S} \not= S$, then $S \setminus \tilde{S}$ is a disjoint union of affine lines. 
\item[{\rm (3)}]
$\ol{P}_n(\tilde{S}) = \ol{P}_n(S)$ for any positive integer $n$. In particular, $\lkd{\tilde{S}} = \lkd{\tilde{S}}$.
\item[{\rm (4)}]
The pair $(\tilde{V}, \tilde{D})$ is almost minimal.
\end{enumerate}
\end{lem}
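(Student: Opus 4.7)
The plan is to iterate the four-step procedure described just before the lemma. Each iteration is either (a) the contraction of a superfluous exceptional component of $D$, or (b) the contraction $f \colon V \to \ol{V}$ provided by Lemma 2.2(2), consisting of a $(-1)$-curve $U \not\subset \Supp D$ with $U \cdot D = 1$ followed by the subsequent contractible components of the maximal admissible rational twig $T_1$. Both types of moves strictly decrease $\rho(V)$, so termination is automatic, and I take $\mu \colon V \to \tilde{V}$ to be the composite of all the contractions performed.

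The bulk of the work is verifying that the four properties are maintained through the iteration. For (1), SNC-ness is preserved in case (a) because a superfluous exceptional component meets the rest of $D$ normally, and in case (b) by Lemma 2.2(2); connectedness is preserved because $\Supp \ol{D}$ is the image of the connected $\Supp D$ under a birational morphism. For (3), operation (a) leaves the open complement unchanged while operation (b) is covered by Lemma 2.2(3). For (2), operation (a) preserves the open complement, and in operation (b) the conditions $U \not\subset \Supp D$ and $U \cdot D = 1$ together with Lemma 2.2(1) imply that the open complement is replaced by the affine open subset obtained by deleting $U \cap S \cong \A^1$ (subsequent contractions in $f$ are of components of $D$, so they do not further shrink the open part). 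The successively removed affine lines are pairwise disjoint on $S$, since the $i$-th one sits inside the current open complement, which by induction equals $S$ minus the previously deleted affine lines; pulling back along $\mu$ yields (2).

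Condition (4) then follows from the stopping criterion: the loop halts only when neither a superfluous exceptional component nor a curve $U$ as in Lemma 2.2 exists, which is precisely the content of Definition 2.1. The main obstacle is the careful bookkeeping in (2), namely tracking that each type-(b) move removes exactly one $\A^1$ from the current open complement and that these affine lines transport back disjointly to $S$ under $\mu$; everything else is a direct assembly of Lemma 2.2 with the algorithm preceding the lemma.
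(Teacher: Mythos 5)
Your proposal is correct and follows essentially the same route as the paper, which proves this lemma by iterating the four-step algorithm displayed just before the statement and invoking Lemma 2.2 at each stage (the paper's written proof is the single line ``The assertion follows from Lemma 2.2''). Your additional bookkeeping --- termination via the drop in Picard number, the identification of each removed curve $U \cap S$ with $\A^1$ via $U \cdot D = 1$, and the disjointness of the successively deleted affine lines --- is exactly the content the paper leaves implicit.
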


\begin{proof}
The assertion follows from Lemma 2.2.
\end{proof}

We call the pair $(\tilde{V}, \tilde{D})$ in Lemma 2.3 an {\em almost minimal model} of $(V,D)$. 

The following lemma is a part of \cite[Lemma 1.4]{K13}. 

\begin{lem}
Let $S$ and $(V,D)$ be the same as above. Assume further that $(V,D)$ is almost minimal. Then the following assertions hold true{\rm :}
\begin{itemize}
\item[{\rm (1)}]
$\lkd{S} \geq 0$ if and only if $K_V + D^{\#}$ is nef.
\item[{\rm (2)}]
If $\lkd{S} \geq 0$, then $K_V + D^{\#}$ is semiample. Moreover, we have the following{\rm :}
\begin{itemize}
\item[{\rm (2-1)}]
$\lkd{S} = 0$ $\Longleftrightarrow$ $K_V + D^{\#} \equiv 0$.
\item[{\rm (2-2)}]
$\lkd{S} = 1$ $\Longleftrightarrow$ $(K_V + D^{\#})^2 = 0$ and $K_V + D^{\#} \not\equiv 0$.
\item[{\rm (2-3)}]
$\lkd{S} = 2$ $\Longleftrightarrow$ $(K_V + D^{\#})^2 > 0$.
\end{itemize}
\end{itemize}
\end{lem}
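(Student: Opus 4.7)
\emph{Approach.} The plan is to identify $K_V+D = (K_V+D^{\#})+\Bk(D)$ with the Zariski decomposition of $K_V+D$ under almost minimality, and to read off the Iitaka dimension from the self-intersection of the nef part. By the very construction of $D^{\#}$, we already have $(K_V+D^{\#})\cdot Z=0$ for every irreducible component $Z\subset\Supp\Bk(D)$, and the intersection matrix of $\Bk(D)$ is negative definite because each maximal admissible rational twig is admissible. Thus only the nefness of $K_V+D^{\#}$ needs to be established; once it is, the trichotomy in (2) will come from the standard classification of Iitaka dimensions of semi-ample $\Q$-divisors on surfaces.

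\emph{Proof of (1).} For the forward direction I would argue by contradiction: if $\lkd{S}\geq 0$ but some irreducible curve $U$ satisfies $U\cdot(K_V+D^{\#})<0$, then by Definition 2.1 the intersection matrix of $U+\Bk(D)$ is not negative definite. Invoking the standard Fujita--Zariski enlargement procedure on surfaces (cf.\ \cite[Chapter 2]{OAS}), one then produces a strictly larger effective $\Q$-divisor serving as a candidate negative part of $K_V+D$, contradicting the uniqueness of the Zariski decomposition of the pseudoeffective $\Q$-divisor $K_V+D$ (pseudo-effectivity being a consequence of $\lkd{S}\geq 0$). For the converse, if $K_V+D^{\#}$ is nef, then for every $n$ for which $nD^{\#}$ is integral one computes $\lceil n(K_V+D^{\#})\rceil = n(K_V+D)-\lfloor n\Bk(D)\rfloor\leq n(K_V+D)$, so $\ol{P}_n(S)=h^0(V,n(K_V+D))\geq h^0(V,\lceil n(K_V+D^{\#})\rceil)$; Riemann--Roch applied to the nef $\Q$-divisor $K_V+D^{\#}$ makes the right-hand side positive for some $n$ (or $K_V+D^{\#}\equiv 0$, in which case $n(K_V+D)$ is linearly equivalent to the effective $n\Bk(D)$ for suitable $n$), yielding $\lkd{S}\geq 0$.

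\emph{Proof of (2) and main obstacle.} Once $K_V+D^{\#}$ is nef, Kawamata's log base-point-free theorem for surfaces (valid in all characteristics, cf.\ the setup of \cite{K13}) upgrades it to a semi-ample $\Q$-divisor, and I let $\Phi:V\to B$ be the associated Iitaka morphism. Because every component of $\Supp\Bk(D)$ is $\Phi$-contracted, $\lkd{S}=\kappa(V,K_V+D)=\kappa(V,K_V+D^{\#})=\dim B$, and (2-1)--(2-3) reduce to the standard fact that a semi-ample $\Q$-divisor $L$ on a surface satisfies $\kappa(L)=2$ iff $L^2>0$, $\kappa(L)=0$ iff $L\equiv 0$, and $\kappa(L)=1$ iff $L^2=0$ and $L\not\equiv 0$. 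The technical heart of the argument is the enlargement step in the forward direction of (1): one must handle the possibility $U\subset\Supp D$ (where $U$ could be a bark component, in which case $U\cdot(K_V+D^{\#})=0$ by construction and the bad scenario does not arise, or a component of $\lfloor D^{\#}\rfloor$, requiring the combinatorics of superfluous exceptional components recalled in Lemma 2.2). Securing semi-ampleness in positive characteristic is the other delicate point, and it leans on the characteristic-free base-point-free results used in \cite{K13}.
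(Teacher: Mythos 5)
The paper does not actually prove this lemma: its ``proof'' is the single line ``See \cite[Lemma 1.4]{K13}'', so the statement is imported wholesale from that reference (which in turn rests on the almost-minimal-model theory of \cite[Chapter 2]{OAS}). Your sketch is therefore attempting something the paper deliberately avoids, and while its overall shape (identify $(K_V+D^{\#})+\Bk(D)$ with the Zariski decomposition, then read off $\kappa$ from the nef part) is the right one, two of your steps have genuine gaps. First, in the converse direction of (1), Riemann--Roch does not close the case $K_V+D^{\#}\equiv 0$: a numerically trivial integral divisor need not have a nonzero section, so your parenthetical claim that ``$n(K_V+D)$ is linearly equivalent to the effective $n\Bk(D)$ for suitable $n$'' presupposes that some multiple of $K_V+D^{\#}$ is \emph{linearly} trivial. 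That torsion statement is precisely the hard content of the $\lkd{S}=0$ case of log abundance for surfaces and is obtained in \cite{K13}/\cite{OAS} only through classification arguments; it cannot be waved through with Riemann--Roch. (Your Riemann--Roch step is fine when $(K_V+D^{\#})\cdot(K_V)<0$, and bigness of $D$ does reduce the remaining case to $K_V+D^{\#}\equiv 0$, but that last case is exactly where the work is.) Second, the base-point-free theorem you invoke for (2) requires $L-(K_V+\Delta)$ to be nef \emph{and big}; with $L=K_V+D^{\#}$ that difference is zero, so the theorem does not apply. What is actually needed is log abundance for lc surface pairs in arbitrary characteristic, which is again supplied in \cite{K13} by the explicit structure theorems (Theorem 2.7 of this paper being the $\kappa=1$ output), not by a general vanishing-type argument.

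A smaller but real defect is in the forward direction of (1): the contradiction is not with ``uniqueness of the Zariski decomposition.'' The correct argument is: since $\lkd{S}\geq 0$, $K_V+D$ is pseudoeffective with Zariski decomposition $P+N$; one first shows $\Bk(D)\leq N$; then $U\cdot(K_V+D^{\#})<0$ together with $U\cdot P\geq 0$ forces $U\subset\Supp(N-\Bk(D))$, so $U+\Bk(D)$ is supported on $\Supp N$, whose intersection matrix is negative definite --- and \emph{that} contradicts almost minimality (Definition 2.1 says the matrix of $U+\Bk(D)$ must fail to be negative definite when $U\cdot(K_V+D^{\#})<0$). As written, your version runs the implication in the wrong direction and omits the needed inequality $\Bk(D)\leq N$. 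If you intend a self-contained proof rather than a citation, these three points are where the argument must be supplied.
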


\begin{proof}
See \cite[Lemma 1.4]{K13}. 
\end{proof}

Now, let $S$ and $(V,D)$ be the same as in Lemma 2.4. Assume further that $(V,D)$ is almost minimal and $\lkd{S} \geq 0$. 
From now on, we construct a strongly minimal model of $(V,D)$. See \cite[Section 1]{K13} for more details. By Lemma 2.4 (1), $K_V + D^{\#}$ is nef. In particular, $K_V + D \equiv (K_V+D^{\#}) + \Bk (D)$ gives the Zariski decomposition of $K_V+D$. 

\begin{lem}
Assume that there exists a $(-1)$-curve $E$ such that $E \cdot (K_V + D^{\#}) = 0$, $E \not\subset \Supp (\lfloor D^{\#} \rfloor)$ and the intersection matrix of $E+ \Bk(D)$ is negative definite. Let $\sigma: V \to W$ be a composite of the contraction of $E$ and the contractions of all subsequently contractible components of $\Supp (\Bk(D))$. Set $B:= \sigma_*(D)$. Then the following assertions hold true.
\begin{enumerate}
\item[{\rm (1)}]
$E \cdot D = E \cdot \lfloor D^{\#} \rfloor = 1$.
\item[{\rm (2)}]
The divisor $B$ is an SNC-divisor and each connected component of $\sigma(\Supp (\Bk(D)))$ is an admissible rational twig. 
\item[{\rm (3)}]
For every integer $n \geq 1$, $\ol{P}_n(V \setminus \Supp D) = \ol{P}_n(W \setminus \Supp B)$. 
\item[{\rm (4)}]
$K_V + D^{\#} = \sigma^*(K_W + B^{\#})$. In particular, the pair $(W,B)$ is an almost minimal SNC-pair with $\lkd{W\setminus \Supp B} = \lkd{V \setminus \Supp D} (\geq 0)$.
\end{enumerate}
\end{lem}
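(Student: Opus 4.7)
The proof hinges on part (1); once (1) is established, the remaining parts reduce to routine bookkeeping for the contraction $\sigma$. My plan is to derive (1) from the intersection identity $E \cdot D^{\#} = 1$ combined with almost minimality of $(V,D)$ and the affineness of $S$, and then to observe that (1) forces $\sigma$ to be merely the blowdown of $E$, after which (2)--(4) follow by a direct computation on the Zariski decomposition.

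For (1), since $E^2 = -1 = E \cdot K_V$, the hypothesis $E \cdot (K_V + D^{\#}) = 0$ gives $E \cdot D^{\#} = 1$. The hypothesis $E \not\subset \Supp(\lfloor D^{\#} \rfloor)$ together with admissibility of twigs (no $(-1)$-curves in the bark) forces $E \not\subset \Supp D$, and then affineness of $S$ (so that $\Supp D$ supports an ample divisor) gives $E \cdot D \geq 1$. Writing $D^{\#} = \lfloor D^{\#} \rfloor + \sum_i \alpha_i Z_i$ with $0 < \alpha_i < 1$ and $Z_i$ ranging over the components of $\Bk(D)$, the expansion
\begin{equation*}
1 = E \cdot D^{\#} = E \cdot \lfloor D^{\#} \rfloor + \sum_i \alpha_i (E \cdot Z_i)
\end{equation*}
leaves two possibilities: either the bark sum vanishes, giving the desired $E \cdot D = E \cdot \lfloor D^{\#} \rfloor = 1$, or the bark sum is positive, in which case $\alpha_i < 1$ forces $E \cdot (\Bk(D))_{\rm red} \geq 2$. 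The plan for ruling out the latter is to show, in each sub-case---$E$ meeting a single twig component with multiplicity $\geq 2$, or $E$ meeting two distinct bark components---that the resulting $\sigma_*(D)$ would fail to be SNC (a nodal image appears on the twig component, or the two twigs merge into a non-twig configuration), or that the resulting contraction would contradict almost minimality of $(V,D)$. This runs in parallel with the SNC/contraction analysis carried out in the proof of Lemma 2.2 and in \cite[Lemma 2.3.7.1]{OAS}.

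Once (1) is in hand, $E$ is disjoint from $\Bk(D)$, so contracting $E$ does not turn any bark component into a $(-1)$-curve, and $\sigma$ reduces to the blowdown of $E$ alone. Since $E$ meets $D$ transversally at a single point, $B = \sigma_*(D)$ is SNC and $\Bk(D)$ maps isomorphically onto a subdivisor whose connected components remain admissible rational twigs in $B$, which gives (2). For (4), combine $K_V = \sigma^* K_W + E$ with $\sigma^* B = D + E$; the defining equations $(K_V + D^{\#}) \cdot Z = 0$ for bark components $Z$ transfer verbatim to $(K_W + B^{\#}) \cdot \sigma(Z) = 0$, so the bark coefficients agree and $\sigma^* \Bk(B) = \Bk(D)$. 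Therefore $\sigma^* B^{\#} = D^{\#} + E$ and
\begin{equation*}
\sigma^*(K_W + B^{\#}) = (K_V - E) + (D^{\#} + E) = K_V + D^{\#}.
\end{equation*}
Assertion (3) then follows by identifying $\ol{P}_n$ with $h^0$ of the rounded nef part of $n(K+D)$ on either side and applying the projection formula; almost minimality of $(W, B)$ is established by contradiction, since any destabilizing curve on $W$ would lift via $\sigma$ to one on $V$ violating almost minimality of $(V, D)$, and the equality of log Kodaira dimensions is then immediate from (3) via Lemma 2.4.

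The main obstacle is the rule-out step in part (1): showing that $E$ cannot meet the bark requires a careful analysis of the SNC geometry of $\sigma_*(D)$ and the combinatorics of admissible twigs, paralleling the case $n = 2$ in the proof of Lemma 2.2. Everything after (1) is essentially bookkeeping for the Zariski decomposition under a single blowdown.
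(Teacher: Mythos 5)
The paper itself does not prove this lemma; it simply cites \cite[Lemma 1.5]{K13} (adding only the remark that $\Supp B$ is connected and is neither a rod nor a fork because $B$ is big). So your attempt is being measured against an external reference rather than an in-paper argument. Your reduction of everything to assertion (1), and your treatment of (2)--(4) once (1) is granted, are sound: given $E\cdot D=E\cdot\lfloor D^{\#}\rfloor=1$ the curve $E$ is disjoint from $\Supp(\Bk(D))$, $\sigma$ is the single blowdown of $E$, the maximal admissible rational twigs of $B$ are exactly the images of those of $D$, and the computation $\sigma^*(K_W+B^{\#})=K_V+D^{\#}$ goes through as you describe.

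The genuine gap is exactly where you flag "the main obstacle": the case $E\cdot\lfloor D^{\#}\rfloor=0$, $E\cdot(\Bk(D))_{\red}\geq 2$ is never actually ruled out, and the mechanisms you propose for ruling it out (that $\sigma_*(D)$ would fail to be SNC, or that almost minimality of $(V,D)$ would be violated) are not the right ones. A curve with $E\cdot(K_V+D^{\#})=0$ does not contradict almost minimality, which only constrains curves with strictly negative intersection; and the failure of $B$ to be SNC is a conclusion of the lemma, not a hypothesis you may contradict. The hypothesis that actually kills this case is the negative definiteness of $E+\Bk(D)$, which your rule-out step never invokes. That it is indispensable is shown by the paper's own Example 3.2: there $E_2$ is a $(-1)$-curve with $E_2\not\subset\Supp D$, $E_2\cdot(K_V+D^{\#})=-1+\tfrac{2}{3}+\tfrac{1}{3}=0$, and $E_2$ meets the two twig components $D_1$ and $D_3$, so $E_2\cdot D=2$; every hypothesis of the lemma holds except negative definiteness (the intersection matrix of $E_2+D_1+D_3+D_2$ has determinant $0$, and indeed $\bigl(E_2+\tfrac13 D_1+\tfrac23 D_3+\tfrac13 D_2\bigr)^2=0$). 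The correct argument is therefore lattice-theoretic: from $\sum_i\alpha_i(E\cdot Z_i)=1$ with all $\alpha_i<1$ one produces an effective $\Q$-divisor supported on $E+\Bk(D)$ (essentially $E$ plus suitable multiples of the barks of the twigs it meets) with self-intersection $\geq 0$, contradicting negative definiteness. Without this computation, which is the content of \cite[Lemma 1.5]{K13} and the corresponding peeling lemmas in \cite{OAS}, assertion (1) --- and hence the whole lemma --- remains unproved.
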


\begin{proof}
See \cite[Lemma 1.5]{K13}, where we note that $\Supp B$ is connected and that $B$ is neither an admissible rational rod nor an admissible rational fork since $B = \sigma_*(D)$ is a big divisor. 
\end{proof}

By using the argument as above and Lemmas 2.3 and 2.5, we have the following result. 

\begin{lem}
Let $S$ be a smooth affine surface of $\lkd{S} \geq 0$ and let $(V,D)$ be its SNC-completion. Then there exists a birational morphism $f: V \to \tilde{V}$ onto a smooth projective surface $\tilde{V}$ such that the following conditions are satisfied{\rm :}
\begin{enumerate}
\item[{\rm (1)}]
Set $\tilde{D}:= f_*(D)$. Then $(\tilde{V}, \tilde{D})$ is an almost minimal SNC-pair such that $\ol{P}_n(\tilde{V}\setminus \Supp \tilde{D}) = \ol{P}_n(S)$ for every $n \geq 1$. In particular, $\lkd{\tilde{V}\setminus \Supp \tilde{D}} = \lkd{S}$.
\item[{\rm (2)}]
There exist no superfluous exceptional components of $\tilde{D}$.
\item[{\rm (3)}]
There exist no $(-1)$-curves $E$ such that $E\cdot (\tilde{D}^{\#} + K_{\tilde{V}}) = 0$, $E \not\subset \Supp \lfloor \tilde{D}^{\#} \rfloor$ and the intersection matrix of $E+ \Bk(\tilde{D})$ is negative definite.
\item[{\rm (4)}]
The surface $\tilde{S} = \tilde{V} \setminus \Supp \tilde{D}$ is an affine open subset of $S$. If $\tilde{S} \not= S$, then $S \setminus \tilde{S}$ is a disjoint union of affine lines. 
\end{enumerate}
\end{lem}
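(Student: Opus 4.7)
The plan is to construct $f$ by iterating Lemmas 2.3 and 2.5 and to verify termination via a Picard number argument. First I would apply Lemma 2.3 to $(V, D)$ to obtain a birational morphism $\mu_0 : V \to V_0$ onto an almost minimal SNC-pair $(V_0, D_0)$ with $D_0 = (\mu_0)_*(D)$, preserving all logarithmic plurigenera (so $\lkd{V_0 \setminus \Supp D_0} = \lkd{S} \geq 0$) and such that $S \setminus (V_0 \setminus \Supp D_0)$ is a disjoint union of affine lines. Since the construction of almost minimal models recalled just before Lemma 2.3 alternates contractions of superfluous exceptional components with the peeling contractions of Lemma 2.2 until nothing more can be done, I may assume in addition that $D_0$ has no superfluous exceptional components. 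Then $\lkd{V_0 \setminus \Supp D_0} \geq 0$, so by Lemma 2.4 the divisor $K_{V_0} + D_0^{\#}$ is nef and Lemma 2.5 is applicable to $(V_0, D_0)$.

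Next I would check whether condition (3) already holds for $(V_0, D_0)$. If it does, set $f := \mu_0$ and $(\tilde V, \tilde D) := (V_0, D_0)$ and we are done. Otherwise, pick a $(-1)$-curve $E$ that violates (3) and apply Lemma 2.5 to produce $\sigma_0 : V_0 \to V_1$ with $D_1 := (\sigma_0)_*(D_0)$; by parts (3) and (4) of that lemma the pair $(V_1, D_1)$ is again an almost minimal SNC-pair with the same logarithmic plurigenera, and by part (1) we have $E \cdot D_0 = 1$, so $E \setminus (E \cap D_0) \cong \A^1$ is the only piece removed from the open surface (the other contracted components lie in $\Supp \Bk(D_0) \subset \Supp D_0$ and so contribute nothing to the affine complement). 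If $D_1$ now acquires superfluous exceptional components, I contract them inside $\Supp D_1$; the resulting pair is still almost minimal by the construction recalled earlier. Then repeat the check on the updated pair.

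This procedure must terminate, since every application of Lemma 2.5 strictly decreases the Picard number of the ambient surface (the contracted $(-1)$-curve $E$ alone accounts for this), and the interleaved contractions of superfluous components do the same; as the Picard number is a positive integer it cannot drop indefinitely, so after finitely many steps condition (3) is forced to hold and we take $f$ to be the composite of all the birational morphisms constructed. Conditions (1) and (2) are preserved throughout by Lemmas 2.3 and 2.5, (3) is the termination criterion, and (4) follows by transitivity because the $\A^1$ removed at each application of Lemma 2.5 is disjoint from those removed earlier (they come from distinct irreducible curves on the starting surface $V$). The principal step to check carefully is termination---which, as just indicated, is immediate from the Picard number bound---with the only minor bookkeeping being the verification that restoring the no-superfluous-component condition after Lemma 2.5 does not destroy almost minimality, which is automatic from the characterization of almost minimality via $D^{\#}$ and $\Bk(D)$.
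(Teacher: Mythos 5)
Your proposal is correct and follows essentially the same route the paper intends: the paper's (unwritten) proof of this lemma is precisely the iteration of Lemma 2.3 and Lemma 2.5, interleaved with contractions of superfluous exceptional components, with termination guaranteed by the strict drop in Picard number at each contraction. Your tracking of the removed affine lines via $E \cdot D = 1$ from Lemma 2.5 (1) matches the paper's justification of condition (4).
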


In Lemma 2.6, we call the pair $(\tilde{V}, \tilde{D})$ a {\em strongly minimal model} of $(V,D)$. The SNC-pair $(V,D)$ with $\lkd{V \setminus \Supp D} \geq 0$ is said to be {\em strongly minimal} if $(V,D)$ becomes a strongly minimal model of itself.
\smallskip

We recall the structure theorem for smooth affine surfaces of $\ol{\kappa} = 1$, which is a special case of \cite[Theorem 2.1]{K13}. See also \cite[Theorem 2.3]{Kaw78} in the case where $\ch (k) = 0$. A detailed proof of \cite[Theorem 2.3]{Kaw78} is given in \cite[Chapter 2, Sections 2$\sim$4]{LNM857}. For smooth affine surfaces, we can shorten \cite[Theorem 2.1]{K13}. 

\begin{thm}
Let $S$ be a smooth affine surface of $\lkd{S} = 1$ and $(V,D)$ its SNC-completion. Assume that $(V,D)$ is strongly minimal. 
Let $h : V \to W$ be a successive contraction of all $(-1)$-curves $E$ such that the intersection of $E$ and the image of $K_V + D^{\#}$ equals zero. Set $C:= h_*(D^{\#})$. The following assertions hold true.

\begin{enumerate}
\item[{\rm (1)}]
For a sufficiently large integer $n$, the complete linear system $|n(K_V + D^{\#})|$ defines a $\BP^1$-fibration $\Phi: V \to B$ from $V$ onto a smooth projective curve $B$ such that $\pi:= \Phi \circ h^{-1} : W \to B$ is a relatively minimal model of $\Phi$ and that $F\cdot C = F\cdot \lfloor C \rfloor = 2$, where $F$ is a general fiber of $\Phi$.
\item[{\rm (2)}]
We set as $C = H + \sum_i d_i F_i$, where $H$ is the sum of the horizontal components of $C$ and the $F_i$'s are fibers of $\pi$. Then $H$ is an SNC-divisor and consists of either two sections or an irreducible $2$-section of $\pi$. 
\item[{\rm (3)}]
The divisor $K_W + C$ can be expressed as follows{\rm :}
$$
K_W + C = \pi^*(K_B+\delta) + \sum_i d_i F_i,
$$
where $\delta$ is a divisor on $B$ such that $t:= \deg \delta$ equals $H_1\cdot H_2$ {\rm (}resp.\ one half of the number of the branch points of $\pi|_H$, $1-g(B)${\rm )} if $H=H_1+H_2$ with sections $H_1$ and $H_2$ {\rm (}resp.\ $H$ is irreducible and $\pi|_H$ is separable, $H$ is irreducible and $\pi|_H$ is not separable{\rm )} and 
$$ d_i =\left\{\begin{array}{ll}
                         \frac{1}{2} \left(1-\frac{1}{m_i}\right) & \mbox{if}\ \# (F_i \cap H) = 1, \\
                         1-\frac{1}{m_i} & \mbox{if}\ \# (F_i \cap H) = 2, 
                        \end{array}\right. $$
where $m_i$ is a positive integer or $+\infty$.
\item[{\rm (4)}]
With the same notations as in {\rm (2)} and {\rm (3)}, the following assertions hold.
\begin{enumerate}
\item[{\rm (i)}]
If $\# (F_i \cap H) = 2$ and $0 < d_i < 1$, then $\Supp (D) \cap \Supp (h^*(F_i))$ is not connected.
\item[{\rm (ii)}]
If $\# (F_i \cap H) = 1$ and $d_i > 0$, then $d_i = \frac{1}{2}$.
\end{enumerate}
\end{enumerate}
\end{thm}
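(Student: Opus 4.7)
The plan is to assemble the theorem from Lemma 2.4 together with affine-specific arguments, and then apply the standard canonical bundle formula for $\BP^1$-fibrations in the logarithmic setting.

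For assertion (1), strong minimality and $\lkd{S} = 1$ combined with Lemma 2.4 give that $K_V+D^{\#}$ is semiample with $(K_V+D^{\#})^2 = 0$ and $K_V+D^{\#} \not\equiv 0$, so $|n(K_V+D^{\#})|$ defines a fibration $\Phi \colon V \to B$ onto a smooth projective curve for every sufficiently large $n$, with a general fiber $F$ satisfying $F^2 = F \cdot (K_V+D^{\#}) = 0$. Every irreducible component $Z$ of $\Bk(D)$ has $Z \cdot (K_V+D^{\#}) = 0$ by the defining property of the bark, so $Z$ is $\Phi$-contracted; thus $\Bk(D)$ is $\Phi$-vertical and a general $F$ satisfies $F \cdot D = F \cdot D^{\#}$. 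Combining the adjunction identity $F \cdot K_V = 2 p_a(F)-2 = -F \cdot D^{\#}$ with the fact that $S$ is affine (so $F \not\subset S$) rules out $p_a(F) \geq 2$ (because $D^{\#}$ is effective) and $p_a(F) = 1$ (both the elliptic and the quasi-elliptic case would force $F \cdot D = 0$), leaving $p_a(F) = 0$ and $F \cdot D = 2$. All $(-1)$-curves contracted by $h$ lie in fibers of $\Phi$, so $\pi := \Phi \circ h^{-1}$ is a relatively minimal $\BP^1$-fibration; moreover $F \cdot \lfloor C \rfloor = F \cdot C = 2$ because horizontal components of $D^{\#}$ (and hence of $C$) carry coefficient one.

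For assertions (2) and (3), let $H$ be the horizontal part of $C$. Since $H \cdot F = 2$ and $H$ is SNC (inherited from $D$, as $h$ is an isomorphism in a neighbourhood of the horizontal components), $H$ is either the union of two distinct sections of $\pi$ or a single irreducible $2$-section. The relation $(K_W + H) \cdot F = 0$ yields $K_W + H \equiv \pi^{*}(K_B + \delta)$ for some $\Q$-divisor $\delta$ on $B$, and writing $C - H = \sum_i d_i F_i$ gives the displayed decomposition in (3). The degree $t = \deg \delta$ is obtained by restricting $K_W+H$ to a horizontal component and invoking the adjunction formula, with Riemann--Hurwitz (or its inseparable analogue) treating the $2$-section case and producing the three stated values of $t$. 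The coefficient $d_i$ attached to each singular fiber is then determined by the defining equation of the bark applied to the components of $h^{*}(F_i)$ that lie in $\Bk(D)$; depending on whether $\#(F_i \cap H) = 2$ or $1$, this yields $d_i = 1 - 1/m_i$ or its halved counterpart, where the integer or $+\infty$ symbol $m_i$ encodes the length and multiplicity data of the associated twig.

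For assertion (4), one enumerates the configurations of the fiber $h^{*}(F_i)$ and the part of $D$ it contains, compatible with the strong minimality of $(V,D)$. In case (i), connectedness of $\Supp D \cap \Supp h^{*}(F_i)$ would allow the whole fiber component to enter an admissible rational twig together with an additional $(-1)$-curve that violates condition (3) of Lemma 2.6. In case (ii), inspection of the possible twig shapes in a fiber meeting $H$ transversally in a single point, combined with strong minimality, forces $m_i = \infty$ and hence $d_i = 1/2$. The main obstacle is precisely this final case analysis: the combinatorial enumeration of admissible bark configurations inside singular fibres of a relatively minimal $\BP^1$-fibration must be carried out in a way that verifies every strong-minimality condition is exhausted, and which works uniformly in all characteristics.
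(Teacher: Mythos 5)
Your overall strategy coincides with the paper's: the paper proves this theorem by combining the general structure theorem \cite[Theorem 2.1]{K13} (together with \cite[Lemma 2.3, Lemma 2.5, Remark 2.6]{K13}) with exactly the two affine-specific observations you make, namely that a general fiber with $p_a(F)=1$ would be a complete curve inside the affine surface $S$ (which kills the elliptic and quasi-elliptic cases), and that $\Supp D$ is connected. Your derivation of assertion (1) --- semiampleness and numerical dimension one from Lemma 2.4, verticality of $\Bk(D)$, adjunction to force $p_a(F)=0$ and $F\cdot C=F\cdot\lfloor C\rfloor=2$ --- is sound and is essentially a self-contained reconstruction of what the paper imports from \cite{K13}. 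The same goes for assertion (2) and for the shape of the decomposition in (3) (one remark: since $\pi$ is relatively minimal, $W$ is geometrically ruled and $\Pic(W)=\Z\oplus\pi^*\Pic(B)$, so you in fact get $K_W+H=\pi^*(K_B+\delta)$ as an identity of divisor classes, not merely the numerical equivalence you write; the stronger statement is what the theorem asserts and what Section 3 uses).

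There is, however, a genuine gap, and you have located it yourself: the precise coefficient formula in (3) (why $d_i=1-\tfrac1{m_i}$ with $m_i\in\Z_{>0}\cup\{+\infty\}$, and why the coefficient is halved exactly when $\#(F_i\cap H)=1$) and the whole of assertion (4) are not proved but only described as ``a case analysis to be carried out.'' This is not a routine verification: it is where the content of the theorem sits, and the paper outsources it to \cite[Lemma 2.5 and Remark 2.6]{K13}, whose proofs enumerate the possible singular fibers of $\pi$ and the admissible twig configurations inside them. Moreover, the mechanism you propose for (4)(i) --- that connectedness of $\Supp D\cap\Supp h^*(F_i)$ would produce a $(-1)$-curve violating condition (3) of Lemma 2.6 --- is not obviously the right dichotomy: the contracted $(-1)$-curve in such a fiber can satisfy $E\cdot(K_V+D^{\#})=0$ with $E+\Bk(D)$ negative definite in the \emph{disconnected} configurations as well (compare the fiber $h^*(F_2)=[3,1,2,2]$ in Example 3.2 of the paper, which is not strongly minimal for precisely this reason), so strong minimality constrains the fiber in both cases and the argument must instead track which components of the fiber carry coefficient $1$ in $D^{\#}$, i.e.\ which ones lie outside every maximal admissible rational twig. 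Until that enumeration is actually performed (or \cite{K13} is cited for it), assertions (3) and (4) remain unestablished.
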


\begin{proof}
\noindent
We infer from \cite[Theorem 2.1]{K13} that,  for a sufficiently large integer $n$, the complete linear system $|n(K_V + D^{\#})|$ defines a fibration $\Phi: V \to B$ from $V$ onto a smooth projective curve $B$ such that $\Phi$ is an elliptic fibration, a quasi-elliptic fibration or a $\BP^1$-fibration, that $\pi:= \Phi \circ h^{-1} : W \to B$ is a relatively minimal model of $\Phi$ and that $F\cdot C = F\cdot \lfloor C \rfloor = 2$ {\rm (}resp.\ $F\cdot C = F\cdot \lfloor C \rfloor = 0${\rm )} if $p_a(F) = 0$ {\rm (}resp.\ $p_a(F) = 1${\rm )}, where $F$ is a general fiber of $\Phi$. Since $S$ is affine, $S$ contains no complete curves. Hence $\Phi$ is a $\BP^1$-firation. 
By \cite[Lemma 2.3]{K13}, the divisor $H$ is an SNC-divisor. 
This proves the assertion (1). The assertions (2) and (3) then follow from \cite[Theorem 2.1 (II)]{K13}. The assertion (4) follows from \cite[Remark 2.6]{K13}, where we note that $\Supp D$ is connected since $S$ is affine. 
\end{proof}

\begin{remark}
Let the notations and assumptions be the same as in Theorem {\rm 2.7}. Set $\A^1_* = \A^1 \setminus \{ 0 \}$. Then $\Phi|_{S}$ becomes an untwisted $\A^1_*$-fibration {\rm (}resp.\ a twisted $\A^1_*$-fibration, an $\A^1$-fibration{\rm )} if $H = H_1+H_2$ is reducible {\rm (}resp.\ $H$ is irreducible and $\pi|_{H}$ is separable, $H$ is irreducible and $\pi|_{H}$ is not separable{\rm )}. 
\end{remark}

\section{Proof of Theorem 1.1}

Let $S$ be a smooth affine surface of $\lkd{S} = 1$ and let $(V,D)$ be its SNC-completion. 
First of all, we prove that $| \lfloor m (K_V + D)^{+} \rfloor |$ induces a $\BP^1$-fibration for $m \geq 8$. 

There exists a birational morphism $f: V \to \tilde{V}$ onto a smooth projective surface $\tilde{V}$ such that $(\tilde{V}, \tilde{D})$, where $\tilde{D} = f_*(D)$, is a strongly minimal model of $(V,D)$. Then $(K_{\tilde{V}} + \tilde{D}^{\#}) + \Bk (\tilde{D})$ gives rise to the Zariski decomposition of $K_{\tilde{V}} + \tilde{D}$ since $K_{\tilde{V}} + \tilde{D}^{\#}$ is nef and $(K_{\tilde{V}} + \tilde{D}^{\#}) \cdot \Bk (\tilde{D}) = 0$. By Theorem 2.7 (1), we know that, for sufficiently large $n > 0$, $|n(K_{\tilde{V}} + \tilde{D}^{\#})|$ gives rise to a $\BP^1$-fibration on $\tilde{V}$ over a smooth projective curve $B$. Since 
$$
h^0 (V, \lfloor m (K_{\tilde{V}} + \tilde{D}^{\#}) \rfloor) = h^0(V, m(K_{\tilde{V}} + \tilde{D})) = h^0 (V, m(K_V + D)) = h^0 (V, \lfloor m (K_V+D)^+ \rfloor)
$$ for any positive integer $m$, we know that, for a positive integer $n$, $|\lfloor n(K_V+D)^+ \rfloor|$ induces a $\BP^1$-fibration on $V$ if and only if so does $| \lfloor n (K_{\tilde{V}} + \tilde{D}^{\#}) \rfloor|$ on $\tilde{V}$. Therefore, in order to prove Theorem 1.1, we may assume that $(V,D)$ is strongly minimal.

From now on, we use the same notations as in Theorem 2.7. 
For a positive integer $n$, $|\lfloor n(K_V+D)^+ \rfloor|$ induces a $\BP^1$-fibration on $V$ if and only if so does $| \lfloor n(K_{W} + C) \rfloor|$ on $W$ because $K_W + C = h_*(K_V+D^{\#})$.  
We set 
$$
\delta_m := m(K_B+\delta)+ \sum_i \lfloor md_i \rfloor \pi(F_i)
$$ for a positive integer $m$. 
Here $\lfloor r \rfloor$ means the integral part of a real number $r$. 
By $\lkd{S} = 1$, we have
$$
\deg (K_B + \delta) + \sum_i d_i > 0. \eqno{(3.1)}
$$
We need to find the least integer $M$ such that, for any $m \geq M$, 
$$
\deg \delta_m = m (2g(B) -2 + t) + \sum_i \lfloor md_i \rfloor \geq 2g(B) + 1 \eqno{(3.2)}
$$
holds. 
Let $s$ be the non-negative integer satisfying $\sum_i d_i = \sum_{i = 1}^s d_i$, where $d_i > 0$ if $s > 0$. 
Further, we assume that
$$
d_1 \geq d_2 \geq \cdots \geq d_s \eqno{(3.3)}
$$
if $s > 0$. 
We consider the following cases separately.
\medskip

\noindent
\textbf{Case 1: }$t\geq 3$. For any $m \geq 1$, $\deg \delta_m \geq 2m g(B) + m(t-2) \geq m (2g(B) + 1)$. So, if $m \geq 1$, (3.2) holds.
\medskip

\noindent
\textbf{Case 2:} $g (B) \geq 2$ (and $t \leq 2$). We consider the following subcases separately.
\smallskip

\noindent
\textbf{2-1:} $t \geq 0$. Then $\deg \delta_m \geq m (2 g(B) -2 ) = 2m (g(B)  - 1)$. Hence, if $m \geq 3$, (3.2) holds.
\smallskip

\noindent
\textbf{2-2:} $t < 0$. By Theorem 2.7 (2), $H$ is irreducible, $\pi|_{H}: H \to B$ is a purely inseparable double covering  and $t = 1 - g(B)$.
So $\deg \delta_m \geq m (g(B)-1) $. Hence, if $\displaystyle m \geq 2 + \frac{3}{g(B) - 1}$, (3.2) holds. In particular, if $m \geq 5$, (3.2)  holds.
\medskip

\noindent
\textbf{Case 3:} $g(B) = 1$ and $1 \leq t \leq 2$. We consider the following subcases separately.
\smallskip

\noindent
\textbf{3-1:} $t = 2$. Then $\deg \delta_m \geq 2m$. So, if $m \geq 2$, (3.2) holds.
\smallskip

\noindent
\textbf{3-2:} $t = 1$. Then $\deg \delta_m \geq m$. So, if $m \geq 3$, (3.2) holds.
\medskip

\noindent
\textbf{Case 4:} $g(B) = 1$ and $t \leq 0$. Then $t = 0$ by Theorem 2.7 (2). By (3.1), we have $s > 0$. We consider the following subcases separately. 
\smallskip

\noindent
\textbf{4-1:} $H = H_1 + H_2$, where $H_1$ and $H_2$ are sections of $\pi$. 
Since $t = 0$, $H_1 \cap H_2 = \emptyset$. 
Then $\displaystyle d_i = 1 - \frac{1}{m_i}$ for $i = 1, \ldots, s$, where $m_i$ is a positive integer or $+\infty$. Since $\Supp D$ is connected, we infer from Theorem 2.7 (4) (i) that $d_i = 1$ for some $i$. By (3.3), $d_1 = 1$. Then we have
$$
\deg \delta_m = \sum_i \lfloor md_i \rfloor  \geq \lfloor md_1 \rfloor = m.
$$
So, if $m \geq 3$, (3.2) holds.
\smallskip

\noindent
\textbf{4-2:} $H$ is irreducible and $\pi|_{H}$ is separable. By Theorem 2.7 (2) and (3), $H$ is smooth and $\pi|_{H}$ is an \'{e}tale double covering. Further, $\displaystyle d_i = 1 - \frac{1}{m_i}$ for $i = 1, \ldots, s$, where $m_i$ is a positive integer or $+\infty$. In particular, $d_i \geq \frac{1}{2}$ for $i = 1, \ldots, s$. Then we have
$$
\deg \delta_m = \sum_i \lfloor md_i \rfloor  \geq \lfloor md_1 \rfloor \geq \lfloor \frac{m}{2} \rfloor.
$$
So, if $m \geq 6$, (3.2) holds. 
\smallskip

\noindent
\textbf{4-3:} $H$ is irreducible and $\pi|_{H}$ is not separable. In this case, $\ch (k) = 2$ and $\pi|_{H}$ is purely inseparable. So $\# F \cap H = 1$ for every fiber $F$ of $\pi$. Since $\Supp D$ is connected, we infer from Theorem 2.7 (4) (ii) that $d_i = \frac12$ for $i = 1, \ldots, s$. Then we have
$$
\deg \delta_m = \sum_i \lfloor md_i \rfloor  \geq \lfloor md_1 \rfloor \geq \lfloor \frac{m}{2} \rfloor.
$$
So, if $m \geq 6$, (3.2) holds. 
\medskip

\noindent
\textbf{Case 5:} $g(B) = 0$ and $t = 2$. By (3.1), $s > 0$. We consider the following subcases separately.
\smallskip

\noindent
\textbf{5-1:} $H = H_1 + H_2$, where $H_1$ and $H_2$ are sections of $\pi$. As seen from Case 2 in the proof of Claim 2 in the proof of \cite[Lemma 2.5]{K13}, we know that $\# F_i \cap H = 2$ for $i = 1,\ldots, s$. By Theorem 2.7 (3), $\displaystyle d_i = 1 - \frac{1}{m_i}$, where $m_i \geq 2$ or $m_i = +\infty$ for $i = 1, \ldots, s$. In particular, $d_i \geq \frac{1}{2}$ for $i = 1, \ldots, s$. Then we have
$$
\deg \delta_m = \sum_{i=1}^s \lfloor md_i \rfloor  \geq  \lfloor \frac{m}{2}  \rfloor .
$$
So, if $m \geq 2$, (3.2) holds.
\smallskip

\noindent
\textbf{5-2:} $H$ is irreducible and $\pi|_{H}$ is separable. Since $t = 2$, $\pi|_{H}$ has $4$ branch points. So $g(H) = 1$, i.e., $H$ is an elliptic curve. We consider the following subcases separately.
\smallskip

\noindent
\textbf{5-2-1:} $\exists i \in \{ 1, \ldots, s \}$ s.t.\ $\# (F_i \cap H) = 2$. We may assume that $\# (F_1 \cap H) = 2$. Then $d_1 \geq \frac{1}{2}$
 by Theorem 2.7 (3). By the argument as in 5-1, we conclude that, if $m \geq 2$, (3.2) holds.
\smallskip

\noindent
\textbf{5-2-2:} $\forall i \in \{ 1, \ldots, s \}$, $\# (F_i \cap H) = 1$. By Theorem 2.7 (4) (ii), $d_i = \frac{1}{2}$ for $i = 1, \ldots, s$. By the argument as in 5-1, we conclude that, if $m \geq 2$, (3.2) holds.
\smallskip

\noindent
\textbf{5-3:} $H$ is irreducible and $\pi|_{H}$ is not separable. By Theorem 2.7 (3), $t = 1 - g(B) = 1 \not= 2$, a contradiction.
\medskip

\noindent
\textbf{Case 6:} $g(B) = 0$ and $t = 1$. By (3.1), $s \geq 2$. We consider the following subcases separately.
\smallskip

\noindent
\textbf{6-1:} $H = H_1 + H_2$, where $H_1$ and $H_2$ are sections of $\pi$. Then $H_1 \cdot H_2 = 1$. By Case 2 in the proof of \cite[Lemma 2.5]{K13}, we know that $\displaystyle d_i = 1 - \frac{1}{m_i}$ for some $m_i \in \Z_{\geq 2} \cup \{ +\infty \}$ for $i = 1, \ldots, s$. In particular, $d_i \geq \frac{1}{2}$ for $i = 1, \ldots, s$. We consider the following subcases separately.
\smallskip

\noindent
\textbf{6-1-1:} $s \geq 3$. Then we have
$$
\deg \delta_m = -m + \sum_{i=1}^s \lfloor m d_i \rfloor \geq -m + 3 \lfloor \frac{m}{2} \rfloor. 
$$
So, if $m \geq 4$, then (3.2) holds. 
\smallskip

\noindent
\textbf{6-1-2:} $s = 2$. By (3.1), we have $\displaystyle d_1 + d_2 = 2 - \left( \frac{1}{m_1} + \frac{1}{m_2} \right) > 1$. By (3.3), $2 \leq m_2 \leq m_1$ and $m_1 \geq 3$. Then we have
$$
\deg \delta_m = -m +  \lfloor m d_1 \rfloor +  \lfloor m d_2 \rfloor \geq -m + \lfloor \frac{2m}{3} \rfloor  + \lfloor \frac{m}{2} \rfloor. 
$$
Therefore, if $m \geq 8$, then (3.2) holds.
\smallskip

\noindent
\textbf{6-2:} $H$ is irreducible and $\pi|_{H}: H \to B \cong \BP^1$ is separable. Since $t = 1$, $g(H) = 0$. Let $P_1$, $P_2$ be the two branch points of $\pi|_{H}$. By Theorem 2.7 (4), we know that if $F_i = \pi^{-1}(P_i)$ for some $i \in \{ 1,2 \}$, then $d_i = \frac{1}{2}$. Set $r = \# \{ i \ | \ 1 \leq i \leq s, F_i = \pi^{-1}(P_j)$ for some $j \in \{ 1, 2 \} \}$. Then $r \in \{ 0, 1, 2 \}$. We consider the following subcases separately.
\smallskip

\noindent
\textbf{6-2-1:} $r = 0$. The numerical conditions on $t$ and $d_i$'s are the same as in 6-1. So we know that:
\begin{itemize}
\item
If $s \geq 3$, then (3.2) holds when $m \geq 4$.
\item
If $s = 2$, then (3.2) holds when $m \geq 8$.
\end{itemize}
Therefore, if $m \geq 8$, then (3.2) holds.
\smallskip

\noindent
\textbf{6-2-2:} $r = 1$. We may assume that $F_s  = \pi^{-1}(P_1)$. Then $d_s = \frac{1}{2}$. In this subcase, the numerical conditions on $t$ and $d_i$'s are the same as in 6-1. Therefore, the same conclusion as that in 6-2-1 holds.
\smallskip

\noindent
\textbf{6-2-3:} $r = 2$. By Theorem 2.7 (3) and (4) and (3.3), we may assume that $F_{s-1} = \pi^{-1}(P_1)$ and $F_s = \pi^{-1}(P_2)$. Then $d_{s-1} = d_s = \frac{1}{2}$. By (3.1), $s \geq 3$. 
By the argument as in 6-1-1, we know that (3.2) holds when $m \geq 4$. 
\smallskip

\noindent
\textbf{6-3:} $H$ is irreducible and $\pi|_{H}: H \to B \cong \BP^1$ is not separable. Then $\ch (k) = 2$ and $\pi|_{H}$ is a purely inseparable double covering. For every $i = 1,2, \ldots, s$, $\# F_i \cap H = 1$. So $\displaystyle d_i = \frac{1}{2}$ for every $i$ by Theorem 2.7 (4). By (3.1), $s \geq 3$. By the argument as in 6-1-1, we know that (3.2) holds when $m \geq 4$. 
\medskip

\noindent
\textbf{Case 7:} $g(B) = 0$ and $t \leq 0$. By the condition on $t$, $t = 0$. By (3.1), $s \geq 3$. We consider the following subcases separately.
\smallskip

\noindent
\textbf{7-1:} $H = H_1 + H_2$, where $H_1$ and $H_2$ are sections of $\pi$. Then $H_1 \cdot H_2 = 0$. By Theorem 2.7 (3), we know that   $\displaystyle d_i = 1 - \frac{1}{m_i}$ for some $m_i \in \Z_{\geq 2} \cup \{ +\infty \}$ for $i = 1, \ldots, s$. Since $\Supp D$ is connected and $H_1 \cdot H_2 = 0$, we infer from Theorem 2.7 (4) (i) that $d_i = 1$ for some $i = 1, \ldots, s$. By (3.3), $d_1 = 1$. Then $\sum_{i = 2}^s d_i > 1$. 
We consider the following subcases separately.
\smallskip

\noindent
\textbf{7-1-1:} $s \geq 4$. Then we have
$$
\deg \delta_m \geq -m + 3 \lfloor \frac{m}{2} \rfloor. 
$$
So, if $m \geq 4$, then (3.2) holds. 
\smallskip

\noindent
\textbf{7-1-2:} $s = 3$. By (3.1), we have $\displaystyle d_2 + d_3 = 2 - \left( \frac{1}{m_2} + \frac{1}{m_3} \right) > 1$. By (3.3), $2 \leq m_3 \leq m_2$ and $m_2 \geq 3$. Then we have
$$
\deg \delta_m \geq -m + \lfloor \frac{2m}{3} \rfloor  + \lfloor \frac{m}{2} \rfloor. 
$$
Therefore, if $m \geq 8$, then (3.2) holds.
\smallskip

\noindent
\textbf{7-2:} $H$ is irreducible. Since $t = 0$, we infer from Theorem 2.7 (3) that $\pi|_H : H \to B \cong \BP^1$ is a separable double covering. Here we note that $H$ is smooth and $\pi|_H$ is unramified. However, this is not the case because $B \cong \BP^1$.  
 Therefore, this case does not take place. 
\medskip

Therefore, we know that, in every case, if $m \geq 8$, then (3.2) holds. This proves the first assertion of Theorem 1.1.

\begin{remark}
In {\rm 2-2} of Case {\rm 2}, we can prove $s \geq 1$ by using results in \cite[\S 2]{M82}. However, we do not use this remark because it does not give better bound. 
\end{remark}

Next, we prove the last assertion of Theorem 1.1. We construct a smooth affine surface $S$ with an SNC-completion $(V,D)$ such that $\lfloor 7 (K_V + D)^{+} \rfloor$ does not induce a $\BP^1$-fibration on $V$. The notions given below are synchronized with those in Sections 2.

\begin{example}
{\rm 
Let $W = \BP^1 \times \BP^1$ and let $\pi: W \to \BP^1$ be the first projection, which gives a structure of $\BP^1$-bundle of $W$ over $\BP^1$. Let $F_1, F_2, F_3$ be three distinct fibers of $\pi$ and let $H_1, H_2$ be two distinct fibers of the second projection. Then $H_1, H_2$ are sections of $\pi$. We set $d_1 = 1$, $d_2 = \frac23$, $d_3 = \frac12$ and set $C = H_1 + H_2 + d_1 F_1 + d_2 F_2 + d_3 F_3 = H_1 + H_2 + F_1 + \frac23 F_2 + \frac12 F_3$. Let $h : V \to W$ be a composition of blowing-ups at $P = F_2 \cap H_1$ and $Q = F_3 \cap H_2$ and their infinitely near points such that $h^*(F_2)_{\red} = [3,1,2,2]$ and $h^*(F_3)_{\red} = [2,1,2]$. Here, we denote the proper transformations of $H_1$, $H_2$ and $F_1$ on $V$ by the same letters. Let $E_i$ ($i=2,3$) be the unique $(-1)$-curve in $\Supp h^*(F_i)$ and set $D = h^*(H_1+H_2+F_1+F_2+F_3)_{\red} - (E_2 + E_3)$. The configuration of $H_1 + H_2 + F_1 + h^*(F_2)_{\red} + h^*(F_3)_{\red}$ on $V$ is given in Figure 1. Here, we have $h^*(F_2) = D_1 + 3E_2 + 2D_3 + D_2$, $h^*(F_3) = D_4 + 2E_3 + D_5$, $D_1^2 = -3$, $D_2^2 = \cdots = D_5^2 = -2$. Set $S = V \setminus \Supp D$. We note that $\pi \circ h |_{S} : S \to \A^1$ is a untwisted $\A^1_*$-fibration over $\A^1$. 
}
\end{example}

\raisebox{-55mm}{
\setlength{\unitlength}{1mm}
\begin{picture}(130,50)(0,0)
\put(60,0){Figure 1.}
\put(40,10){\line(1,0){50}}
\put(35,9){\scriptsize{$H_2$}}
\put(91,9){\scriptsize{$-1$}}
\put(45,8){\line(0,1){49}}
\put(40,33){\scriptsize{$F_1$}}
\put(46,33){\scriptsize{$0$}}
\put(40,55){\line(1,0){50}}
\put(35,54){\scriptsize{$H_1$}}
\put(91,54){\scriptsize{$-1$}}
\put(58,25){\line(1,-2){9}}
\put(59,12){\scriptsize{$D_2$}}
\put(64,14){\scriptsize{$-2$}}
\put(58,17){\line(1,2){9}}
\put(59,28){\scriptsize{$D_3$}}
\put(64,25){\scriptsize{$-2$}}
\multiput(58,42)(1,-1){12}{\circle*{0.2}}
\put(63,38){\scriptsize{$E_2$}}
\put(58,37){\line(1,2){11}}
\put(59,49){\scriptsize{$D_1$}}
\put(64,47){\scriptsize{$-3$}}
\put(75,27){\line(1,-2){10}}
\put(77,12){\scriptsize{$D_5$}}
\put(82,14){\scriptsize{$-2$}}
\put(75,37){\line(1,2){10}}
\put(77,50){\scriptsize{$D_4$}}
\put(82,48){\scriptsize{$-2$}}
\multiput(77,20)(0,1){26}{\circle*{0.2}}
\put(78,30){\scriptsize{$E_3$}}
\end{picture}}
\medskip

\begin{lem}
With the same notations and assumptions as above, the following assertions hold.
\begin{itemize}
\item[(1)]
$S$ is a smooth affine surface.
\item[(2)]
$S$ admits a hyperbolic $\G_m$-action whose quotient morphism is $ \pi \circ h |_{S}$. {\rm (}See \cite[\S 1]{R92} for the definition of a hyperbolic $\G_m$-action.{\rm )}
\end{itemize}
\end{lem}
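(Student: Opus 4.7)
Part (1) splits into smoothness and affineness. Smoothness is immediate: $V$ arises from $W = \BP^1 \times \BP^1$ by blow-ups at smooth points and their infinitely near points, so $V$ (hence $S$) is smooth. For affineness I would invoke Goodman's criterion and exhibit an effective ample divisor with support equal to $\Supp D$. The ansatz $A = \sum_i a_i D_i + b_1 H_1 + b_2 H_2 + c F_1$ with positive coefficients reduces the inequalities $A \cdot C > 0$ for all components $C$ of $D + E_2 + E_3$ to a small linear system (using the intersection data encoded in Figure~1), solved for instance by $A = D_1 + 3 D_2 + D_3 + D_4 + D_5 + 4 H_1 + 6 H_2 + 3 F_1$, which satisfies $A^2 = 51 > 0$. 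To lift Nakai--Moishezon ampleness from components of the boundary to every curve on $V$, I would use the $\BP^1$-fibration $\varphi := \pi \circ h : V \to \BP^1$: any horizontal irreducible $C \neq H_1, H_2$ is not contained in $\Supp A$, so every summand in $A \cdot C = \sum_i c_i (C_i \cdot C)$ is nonnegative and $A \cdot C \geq c (F_1 \cdot C) = 3 \deg(\varphi|_C) > 0$; any vertical irreducible $C$ is either numerically equivalent to $F_1$ (giving $A \cdot C = A \cdot F_1 = 10$) or is one of $D_1,\ldots,D_5,E_2,E_3$. Thus $A$ is ample and $S = V \setminus \Supp A$ is affine.

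For part (2) I would take the $\G_m$-action on $W$ that is trivial on the first factor and acts on the second factor by a nontrivial $\G_m$-action whose two fixed points are the base points of $H_1$ and $H_2$. This action fixes $H_1, H_2$ pointwise, preserves every fibre of $\pi$, and makes $P = F_2 \cap H_1$ and $Q = F_3 \cap H_2$ isolated fixed points. Since each successive blow-up centre is a fixed point of the lifted action on an exceptional $\BP^1$ (which carries exactly two fixed points), the action lifts $\G_m$-equivariantly to $V$; every component of $D$ is $\G_m$-invariant, so $\G_m$ acts on $S$ with $\varphi|_S : S \to \A^1 = \BP^1 \setminus \{\pi(F_1)\}$ as orbit map. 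A chart computation shows that the induced action on $E_2$ (respectively $E_3$) has weights $\pm 3$ (respectively $\pm 2$), that the two fixed points on each of $E_2, E_3$ coincide with its intersection points with the chains in $D$, and more generally that the full fixed locus of the lifted action on $V$ is contained in $D$. Hence $S$ is $\G_m$-fixed-point free, every fibre of $\varphi|_S$ is a single $\G_m$-orbit (with cyclic stabilisers $\mu_3, \mu_2$ along $E_2 \cap S$ and $E_3 \cap S$ respectively), and for any $p \in S$ both $\lim_{t \to 0} t\cdot p$ and $\lim_{t \to \infty} t\cdot p$ exist in $V$ but lie in $H_1 \cup H_2 \subset D$; this is the hyperbolic condition of \cite{R92}.

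The main obstacle I expect is the Nakai--Moishezon check in part (1): producing a positive linear combination is routine, but verifying positivity on \emph{every} irreducible curve on $V$ requires splitting curves into horizontal and vertical via the fibration $\varphi$ and exploiting $F_1 \subset \Supp A$. The $\G_m$-weight calculation in part (2) is a routine chart-by-chart computation, but some care is needed to confirm that the two fixed points on each of $E_2, E_3$ are precisely its intersections with the chains in $D$, and hence that no new $\G_m$-fixed point of $V$ strays into $S$.
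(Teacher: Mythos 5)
Your proof is correct, but for part (1) you take a genuinely different route from the paper. The paper proves (2) first: it constructs the $\G_m$-action on $W=\BP^1\times\BP^1$ fixing $H_1\cup H_2$ pointwise (just as you do) and lifts it through the blow-ups, and then deduces affineness of $S$ by observing that $(V,D)$ is a $\G_m$-pair in the sense of \cite[Definition (3.2)]{R92} --- $V$ has no elliptic fixed points, $\Supp D$ is an invariant connected curve, and every invariant curve meets $\Supp D$ --- so that $S=V\setminus\Supp D$ is affine by \cite[Theorem (4.9)]{R92}. You instead verify affineness directly by exhibiting an effective divisor $A$ with $\Supp A=\Supp D$ and checking Nakai--Moishezon via the horizontal/vertical dichotomy for $\varphi=\pi\circ h$; your divisor $A=D_1+3D_2+D_3+D_4+D_5+4H_1+6H_2+3F_1$ does satisfy all the required inequalities (one computes $A\cdot H_1=A\cdot H_2=A\cdot D_1=A\cdot D_2=A\cdot D_3=1$, $A\cdot D_4=A\cdot E_2=A\cdot E_3=2$, $A\cdot D_5=4$, $A\cdot F_1=10$, $A^2=51$), and the reduction of ampleness to boundary components plus whole fibres is exactly the argument the paper itself carries out later in the proof of Proposition 4.1 with the divisor $L$ there. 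The paper remarks that assertion (1) ``can be verified easily'' and opts for the $\G_m$-pair route; your computation is that easy verification, self-contained and independent of \cite{R92}, whereas the paper's route gets affineness for free once the action is in place. For part (2) your construction agrees with the paper's; the only slip is the claim that for every $p\in S$ both limits lie in $H_1\cup H_2$ --- for $p\in E_2\cap S$ or $p\in E_3\cap S$ the limits are the nodes of the chains $h^*(F_i)_{\red}$, which lie in $\Supp D$ but not on $H_1\cup H_2$. This does not affect the conclusion, since the fixed locus of the lifted action is still contained in $\Supp D\cup E_2\cup E_3$ minus $S$, so the induced action on $S$ is fixed-point free and hyperbolic with quotient $\pi\circ h|_S$.
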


\begin{proof}
We first prove the assertion (2). In fact, the assertion (2) easily follows from results of \cite{R92}. We give an outline of the proof. Since $H_1$ and $H_2$ (on $W$) are sections of $\pi$ and $H_1 \cap H_2 = \emptyset$, there exists an effective $\G_m$-action $\sigma$ on $W$ such that $W^{\sigma} = H_1 \cup H_2$ and $\pi$ becomes the quotient morphism. Since $h : V \to W$ is a composition of blowing-ups over points on $H_1 \cup H_2$, which are fixed points of $\sigma$, and $h^*(F_j)_{\red}$ ($j = 2,3$) is a chain of $\BP^1$'s, there exists uniquely a $\G_m$-action $\tilde{\sigma}$ on $V$ extending the action on $V \setminus (H_1 \cup H_2)$. Each irreducible component of $h^*(F_j)_{\red}$ ($j = 2,3$) is invariant under the action $\tilde{\sigma}$. So the action $\tilde{\sigma}$ induces a hyperbolic $\G_m$-action  on $S = V \setminus \Supp D$. This proves the assertion (2).

The assertion (1) can be verified easily. Here, we prove it using results of \cite{R92}. By the construction of $(V,D)$, we know that $(V,D)$ is a $\G_m$-pair in the sense of \cite[Definition (3.2)]{R92}, where $G$ is $\G_m$ in our notation. Namely, we see that:
\begin{itemize}
\item[(i)]
$V$ is a smooth projective $\G_m$-surface without elliptic fixed points.
\item[(ii)]
$\Supp D$ is an invariant connected closed curve on $V$.
\item[(iii)]
Every invariant closed curve on $V$ meets $\Supp D$.
\end{itemize}
By \cite[Theorem (4.9)]{R92}, $S = V \setminus \Supp D$ is affine. 
\end{proof}

\begin{lem}
With the same notations and assumptions as above, the following assertions hold.
\begin{itemize}
\item[(1)] 
$\lkd{S} = 1$.
\item[(2)]
$D^{\#} = D - \Bk (D) = H_1 + H_2 + F_1 + \frac{2}{3} (D_1 + D_2) + \frac{1}{3} D_3 + \frac{1}{2} (D_4 + D_5)$ and the $\Q$-divisor $K_V + D^{\#}$ is the nef part of the Zariski decomposition of $K_V + D$. 
\item[(3)]
$\lfloor 7(K_V + D^{\#}) \rfloor$ does not induce a $\BP^1$-fibration on $V$. 
\end{itemize}
\end{lem}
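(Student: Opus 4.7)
My plan is to establish (2) first, from which (1) follows immediately; (3) requires a separate computation. To compute $\Bk(D)$, I identify the four maximal admissible rational twigs of $D$: the singleton $\{D_1\}$, the two-component chain $\{D_3, D_2\}$ (with $D_3$ as tip), $\{D_4\}$, and $\{D_5\}$. The bark coefficients $b_Z$ are then determined by the defining equations $(K_V + D^{\#}) \cdot Z = 0$ for each component $Z$ of each twig, which reduce to a small linear system with solution $b_{D_1} = b_{D_2} = \tfrac{1}{3}$, $b_{D_3} = \tfrac{2}{3}$, $b_{D_4} = b_{D_5} = \tfrac{1}{2}$. This yields the formula for $D^{\#}$.

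To see that $K_V + D^{\#}$ is the nef part of the Zariski decomposition of $K_V + D$, I plan to prove the $\Q$-divisor identity $K_V + D^{\#} = h^*(K_W + C)$, where $C := H_1 + H_2 + F_1 + \tfrac{2}{3} F_2 + \tfrac{1}{2} F_3$. First I compute the discrepancies: the adjunction conditions $\varepsilon \cdot (K_V - h^*K_W) = -2 - \varepsilon^2$ for each exceptional component $\varepsilon \in \{D_1, D_3, E_2, D_5, E_3\}$ yield $K_V = h^*K_W + D_1 + 2D_3 + 4E_2 + D_5 + 2E_3$. An analogous calculation gives $h^*H_1 = H_1 + D_1 + D_3 + 2E_2$ and $h^*H_2 = H_2 + D_5 + E_3$, while $h^*F_1, h^*F_2, h^*F_3$ are either trivial or given. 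Substituting into $h^*(K_W + C)$ reproduces precisely $K_V + D^{\#}$. Since $W = \BP^1 \times \BP^1$, an intersection computation gives $K_W + C \equiv \tfrac{1}{6}F$, hence $K_V + D^{\#} \equiv \tfrac{1}{6}(\pi \circ h)^*F$ is nef. Together with $(K_V + D^{\#}) \cdot \Bk(D) = 0$ (by construction of the bark) and the negative-definiteness of each twig, this establishes the Zariski decomposition, proving (2). Assertion (1) then follows: $(K_V + D^{\#})^2 = \tfrac{1}{36}F^2 = 0$ and $K_V + D^{\#} \not\equiv 0$, so $\lkd{S} = 1$.

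For (3), I plan to identify $\lfloor 7(K_V + D^{\#}) \rfloor$ up to linear equivalence. Since $6(K_W + C) \sim F$ on $W$, we have $6(K_V + D^{\#}) = h^*(6(K_W + C)) \sim h^*F$ on $V$. Writing $\lfloor 7(K_V + D^{\#}) \rfloor = 7(K_V+D^{\#}) - \tfrac{2}{3}(D_1+D_2) - \tfrac{1}{3} D_3 - \tfrac{1}{2}(D_4+D_5)$, multiplying by $6$, and using the representative $7 h^*F \sim 4 h^*F_2 + 3 h^*F_3$, a short cancellation gives $6\lfloor 7(K_V+D^{\#})\rfloor \sim 12 E_2 + 6 D_3 + 6 E_3 = 6(2E_2 + D_3 + E_3)$. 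Since $\operatorname{Pic}(V)$ is torsion-free, $\lfloor 7(K_V+D^{\#})\rfloor \sim 2E_2 + D_3 + E_3$. The support of this effective divisor is the disjoint union of $\{E_2, D_3\}$ and $\{E_3\}$, both with negative-definite intersection matrices; a brief argument (any effective $\Delta' \sim 2E_2 + D_3 + E_3$ must, by intersecting with $E_2, E_3, D_3$, contain $2E_2 + D_3 + E_3$ as a summand, so $\Delta' - \Delta$ is effective and numerically trivial, hence zero) gives $h^0(V, 2E_2 + D_3 + E_3) = 1$. Therefore $|\lfloor 7(K_V+D^{\#})\rfloor|$ is zero-dimensional and cannot induce a $\BP^1$-fibration.

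The main obstacle is the discrepancy computation in the second paragraph---tracking coefficients through the five successive blow-ups to verify the $\Q$-divisor identity $K_V + D^{\#} = h^*(K_W + C)$---and, in the third paragraph, choosing the right representative of $7 h^*F$ so that the cancellation lands on a divisor supported on a negative-definite configuration.
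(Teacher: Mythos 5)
Your proposal is correct and follows essentially the same route as the paper: identify the four maximal admissible rational twigs, solve for the bark coefficients, verify $K_V+D^{\#}=h^*(K_W+C)\sim_{\Q}\frac{1}{6}h^*F_1$ via the discrepancy and pullback formulas (your $K_V=h^*K_W+D_1+2D_3+4E_2+D_5+2E_3$, $h^*H_1=H_1+D_1+D_3+2E_2$, $h^*H_2=H_2+D_5+E_3$ all check out and reproduce the paper's (3.4)), and then conclude nefness, the Zariski decomposition, and $\lkd{S}=1$. The one place you genuinely diverge is assertion (3), and there your version is the more careful one: the paper's displayed computation replaces $\lfloor\frac{14}{3}h^*(F_2)\rfloor$ by $\lfloor\frac{14}{3}\rfloor F_1$ and concludes $\lfloor 7(K_V+D^{\#})\rfloor\sim 0$, but the floor does not commute with $\Q$-linear equivalence; the correct class is $2E_2+D_3+E_3$, exactly as you compute, and your negative-definiteness argument gives $h^0=1$, so the conclusion that $|\lfloor 7(K_V+D^{\#})\rfloor|$ is zero-dimensional (hence induces no $\BP^1$-fibration) is unaffected. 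Your derivation of $\lkd{S}=1$ from $(K_V+D^{\#})^2=0$ and $K_V+D^{\#}\not\equiv 0$ implicitly uses Lemma 2.4 (2-2); this is legitimate since nefness of $K_V+D^{\#}$ makes the pair automatically almost minimal, but it is even quicker to note that $6(K_V+D^{\#})\sim h^*F_1$ moves in the pencil pulled back from $\BP^1$, which is how the paper gets $\kappa=1$.
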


\begin{proof}
By the definition of $C$ and $K_W \sim -H_1 - H_2 - 2F_1$, we have
$$
K_W + C \sim -F_1 + \frac{2}{3} F_2 + \frac{1}{2} F_3 \sim_{\Q} \frac{1}{6} F_1,  
$$
where $\sim_{\Q}$ means the $\Q$-linear equivalence relation on $\Q$-divisors. Since $F_1$ is a smooth rational curve with $F_1^2 = 0$, we know that $K_W + C$ is nef and $\kappa(W, K_W+C) = 1$, where $\kappa(W, K_W+C)$ denotes the Iitaka dimension of $K_W+C$. 

By the definition of $D^{\#} = D - \Bk (D)$ in \S 2, we have 
$$
D^{\#} = H_1 + H_2 + F_1 + \frac{2}{3} (D_1 + D_2) + \frac{1}{3} D_3 + \frac{1}{2} (D_4 + D_5).
$$
By the construction of $h$, we see that
$$
K_V \sim -2F_1 - H_1 - H_2 + D_3 +2 E_2 + E_3. 
$$
So, 
$$
K_V + D^{\#} = -F_1+  \frac{2}{3} D_1 + \frac{2}{3} D_2 + \frac{4}{3} D_3 + 2 E_2 + \frac{1}{2} D_4 + \frac{1}{2} D_5 + E_3.   \eqno{(3.4)}
$$
In particular,
$$
K_V + D^{\#} \sim_{\Q} h^*(K_W + C).
$$
This implies that $K_V + D^{\#}$ is nef and $\kappa(V, K_V + D^{\#}) = 1$. This proves the assertions (1) and (2). 

We prove the assertion (3). By (3.4) and 
$$
\frac{2}{3} D_1 + \frac{2}{3} D_2 + \frac{4}{3} D_3 + 2 E_2 \sim_{\Q} \frac{2}{3} h^*(F_2), \quad \frac{1}{2} D_4 + \frac{1}{2} D_5 + E_3 \sim_{\Q} \frac{1}{2} h^*(F_3),
$$
we have
$$
\lfloor 7 (K_V + D^{\#}) \rfloor \sim -7F_1 + \lfloor \frac{14}{3} \rfloor F_1 +   \lfloor \frac{7}{2} \rfloor F_1 = 0.
$$
Therefore, $\lfloor 7(K_V + D^{\#}) \rfloor$ does not induce a $\BP^1$-fibration on $V$. 
\end{proof}

The proof of Theorem 1.1 is thus completed. 

\section{Some remarks}

In this section, we consider Question in Section 1 for special smooth affine surfaces of $\ol{\kappa} = 1$. 

\begin{prop}
For the smooth affine surfaces of $\ol{\kappa} = 1$ with twisted $\A^1_*$-fibrations, we have $M = 8$ {\rm (}in Question in Section {\rm 1}{\rm )} and $8$ is best possible. 
\end{prop}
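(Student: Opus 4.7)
The bound $M \leq 8$ is immediate: Theorem 1.1 already gives the conclusion for every smooth affine surface of $\lkd{S} = 1$, irrespective of fibration type. What requires proof is the optimality, namely the existence of a smooth affine surface $S$ of $\lkd{S} = 1$ carrying a twisted $\A^1_*$-fibration and an SNC-completion $(V, D)$ such that $|\lfloor 7(K_V + D)^+ \rfloor|$ does not define a $\BP^1$-fibration. Scanning the case analysis in the proof of Theorem 1.1 and restricting to the twisted $\A^1_*$ regime (by Remark 2.8, $H$ irreducible and $\pi|_H$ separable), the threshold $m = 8$ is tight only in Case 6-2 with $s = 2$. Tightness there forces $g(B) = 0$, $t = 1$, and fiber coefficients $(d_1, d_2) = (2/3, 1/2)$, giving $\deg \delta_7 = -7 + \lfloor 14/3 \rfloor + \lfloor 7/2 \rfloor = 0 < 1 = 2g(B) + 1$, while $\deg \delta_8 = 1$. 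The plan is to realize exactly this numerical configuration by a blowup construction parallel to Example 3.2, but with the two horizontal sections merged into a single irreducible $2$-section.

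Concretely, take $W = \BP^1 \times \BP^1$ with first projection $\pi$, let $H \subset W$ be a smooth curve of bidegree $(1, 2)$ (so $H \cong \BP^1$ is irreducible and, in characteristic $\ne 2$, $\pi|_H$ is a separable double cover with two branch points $P_1, P_2$), and pick fibers $F_2, F_3$ of $\pi$ away from $\{\pi^{-1}(P_1), \pi^{-1}(P_2)\}$ and in sufficiently general position. Following Example 3.2, at one of the two points of $F_2 \cap H$ perform three successive blowups yielding $h^*(F_2)_{\red} = [3, 1, 2, 2]$, and at one of the two points of $F_3 \cap H$ perform two blowups yielding $h^*(F_3)_{\red} = [2, 1, 2]$; let $E_2, E_3$ denote the unique $(-1)$-components, and set $D = h^*(H + F_2 + F_3)_{\red} - (E_2 + E_3)$ and $S = V \setminus \Supp D$. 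The bark computation and the calculation of $\lfloor 7(K_V + D^{\#}) \rfloor$ are now identical to those of Lemma 3.4 after replacing $H_1 + H_2$ by the single component $H$: one finds $D^{\#} = H + \frac{2}{3}(D_1 + D_2) + \frac{1}{3} D_3 + \frac{1}{2}(D_4 + D_5)$, $K_V + D^{\#} \sim_{\Q} \frac{1}{6} F$ with $F$ a general fiber of $\pi \circ h$ (hence $\lkd{S} = 1$), and $\lfloor 7(K_V + D^{\#}) \rfloor \sim D_3 + 2 E_2 + E_3$, a rigid effective divisor of negative self-intersection whose complete linear system has dimension zero. Strong minimality of $(V, D)$ and the twisted $\A^1_*$-structure on $S$ (via Remark 2.8) follow as in Example 3.2, since the intersection matrices of $E_2 + \Bk(D)$ and $E_3 + \Bk(D)$ remain singular after the merging.

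The main obstacle is affineness of $S$: because $H$ is irreducible, $(V, D)$ carries no hyperbolic $\G_m$-action and the argument of Lemma 3.3(1) via \cite[Theorem (4.9)]{R92} is unavailable. We verify affineness instead by exhibiting an ample $\Q$-divisor supported on $\Supp D$; a convenient candidate is $D' = 6 H + D_1 + 3 D_2 + D_3 + 2 D_4 + 2 D_5$. Its pushforward $h_*(D') = 6 H + 3 F_2 + 2 F_3$ (of bidegree $(11, 12)$) is ample on $W$, $(D')^2 > 0$ by direct computation, and $D' \cdot C > 0$ for every irreducible curve $C$ on $V$: the components of $D$ together with $E_2, E_3$ are handled explicitly, and for any other $C$ one uses $D' \cdot C = h_*(D') \cdot h(C) - (\text{exceptional contribution at the blowup centers})$ together with elementary bounds on the multiplicities of $h(C)$. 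Nakai--Moishezon then gives $D'$ ample and Goodman's criterion gives $S$ affine, completing the proof.
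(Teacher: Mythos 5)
Your reduction to a single tight example, the identification of the extremal numerical data (Case 6--2 with $s=2$, $g(B)=0$, $t=1$, $(d_1,d_2)=(\frac23,\frac12)$), and the construction itself (a $2$-section $H\sim 2G+F$ on $\BP^1\times\BP^1$, two fibers blown up to $[3,1,2,2]$ and $[2,1,2]$ at one of the two points of $F_i\cap H$) coincide with the paper's proof of this proposition. Your computation $\lfloor 7(K_V+D^{\#})\rfloor\sim D_3+2E_2+E_3$, a rigid effective divisor supported on $h$-exceptional curves, is in fact the careful version of the paper's display (which records the fiber classes only and writes $\sim 0$); either way $h^0=1$ and no fibration arises, so that part is fine, as are $\lkd{S}=1$ and the twisted $\A^1_*$-structure via Remark 2.8.

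The one place where you genuinely diverge is the affineness of $S$, and there your argument has a real gap. The paper does \emph{not} attempt a direct Nakai--Moishezon verification: it observes that $H\sim F+H_1+H_2$, where $H_i$ is the fiber of the second projection through the blown-up point $P_i$ (these are automatically distinct because $H\cdot H_i=1$), so that $D$ is linearly equivalent to the boundary divisor of Example 3.2 on the \emph{same} surface $V$; affineness of that complement is already known from the hyperbolic $\G_m$-action and \cite[Theorem (4.9)]{R92}, which yields an ample effective divisor supported there, and linear equivalence transports it to one supported on $\Supp D$. Your alternative --- checking $D'\cdot C>0$ for all irreducible $C$ using ``elementary bounds on the multiplicities of $h(C)$'' --- does not close as stated. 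Writing $D'=h^*(6H+3F_2+2F_3)-8e_1-3e_2-2e_3-6f_1-4f_2$ in terms of total transforms, the proximity inequalities and the bounds $m_1+m_2\le \ol{C}\cdot F_2$, $n_1+n_2\le\ol{C}\cdot F_3$ only give $D'\cdot C\ge 12\beta-3\alpha$ for $h(C)\sim\alpha G+\beta F$, which is not positive when $\beta$ is small relative to $\alpha$; to rule out the extremal configurations (e.g.\ $m_1=\alpha$, i.e.\ $h(C)$ meeting $F_2$ only at the center with full multiplicity) you must additionally invoke the adjunction constraint $\sum\binom{m_i}{2}+\sum\binom{n_j}{2}\le(\alpha-1)(\beta-1)$, which forces $\beta$ to be large and restores positivity. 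So your route can be completed, but the key inequality is not ``elementary'' in the sense you assert, and as written the affineness claim is unproved. Either supply that genus-based estimate or, more economically, use the paper's linear-equivalence trick to borrow affineness from Example 3.2.
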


\begin{proof}
As seen from the argument as in Section 3, we know that $M = 8$. We construct a smooth affine surface $S$ of $\lkd{S} = 1$ with a twisted $\A^1_*$-fibration for which the number $8$ is best possible. 

Let $W = \BP^1 \times \BP^1$ and let $\pi: W \to \BP^1$ be the first projection. Let $F$ (resp.\ $G$) be a fiber of $\pi$ (resp.\ a fiber of the second projection of $W = \BP^1 \times \BP^1$). We take an irreducible curve $H \sim 2G + F$ such that $\pi|_{H}: H \to \BP^1$ is a separable double covering. Furthermore, we take two fibers $F_1$ and $F_2$ of $\pi$ such that $\# F_i \cap H = 2$ for $i = 1,2$. Let $P_i$ ($i=1,2$) be one of the two points $F_i \cap H$. We set $d_1 = \frac{2}{3}$, $d_2 = \frac{1}{2}$ and set $C = H + d_1 F_1 + d_2 F_2 = H + \frac{2}{3} F_1 + \frac{1}{2} F_2$. Let $h: V \to W$ be a composition of blowing-ups at $P_1$ and $P_2$ and their infinitely near points such that $h^*(F_1)_{\red} = [3,1,2,2]$ and $h^*(F_2)_{\red} = [2,1,2]$. Here, we denote the proper transform of $H$ on $V$ by the same letter. Let $E_i$ ($i=1,2$) be the unique $(-1)$-curve in $\Supp h^*(F_i)$ and set $D = h^*(H+F_1+F_2)_{\red} - (E_1+E_2)$. We have $h^*(F_1) = D_1 + 3E_1 + 2 D_3 + D_2$ and $h^*(F_2) = D_4 + 2E_2 + D_5$, where $D_1$ is a $(-3)$-curve, $D_2, \ldots, D_5$ are $(-2)$-curves, $D_1 \cdot E_1 = E_1 \cdot D_3 = D_3 \cdot D_2 = D_4 \cdot E_2 = E_2 \cdot D_5 = 1$. Set $S = V \setminus \Supp D$. Then $\pi \circ h|_{S} : S \to \BP^1$ is a twisted $\A^1_*$-fibration onto $\BP^1$. 

We prove that $S$ is affine. Let $H_i$ ($i=1,2)$ be the fiber of the second projection passing through $P_i$. Since $H \sim 2G + F$, $H_1 \not= H_2$. Further, since $H \sim F + H_1 + H_2$ and $P_i \in H_i \cap H$ for $i=1,2$, we have 
$$
h^*(H) \sim h^*(F+H_1+H_2) \sim H + D_1 + D_3 + 2E_1 + D_5 + E_2 
$$
$$
\sim h^{-1}_*(F) + h^{-1}_*(H_1) + h^{-1}_*(H_2) + D_1 + D_3 + 2 E_1 + D_5 +E_2.
$$
Namely, $D \sim h^{-1}_*(F) + h^{-1}_*(H_1) + h^{-1}_*(H_2) + D_1 + D_2 + D_3 + D_4 + D_5$. 
Since $ h^{-1}_*(F) + h^{-1}_*(H_1) + h^{-1}_*(H_2) + D_1 + \cdots + D_5$ is the divisor $D$ in Example 3.2, we know that $V \setminus \Supp ( h^{-1}_*(F) + h^{-1}_*(H_1) + h^{-1}_*(H_2) + D_1 + \cdots + D_5)$ is affine by Lemma 3.3 (1). 
Here we set $L = 6 (h^{-1}_*(F) + h^{-1}_*(H_1) + h^{-1}_*(H_2) ) + D_1 + 3 D_2 + D_3 + D_4 + D_5$. Then $L \cdot \tilde{D} > 0$ for any irreducible curve $\tilde{D}$ in $\Supp (h^{-1}_*(F) + h^{-1}_*(H_1) + h^{-1}_*(H_2) + D_1 + \cdots + D_5)$. Since $V \setminus \Supp (h^{-1}_*(F) + h^{-1}_*(H_1) + h^{-1}_*(H_2) + D_1 + \cdots + D_5)$ is affine, we have $L \cdot D' > 0$ for any irreducible curve $D'$ on $V$. So $L$ is ample by the Nakai--Moishezon criterion. (See also \cite[Proposition (4.1)]{R92}.) Since $L \sim 6 H + D_1 + 3D_2 + D_3 + D_4 + D_5$ is ample, we know that $S = V \setminus \Supp (6 H + D_1 + 3D_2 + D_3 + D_4 + D_5)$ is affine. 

Since the numerical conditions on $ h^{-1}_*(F) + h^{-1}_*(H_1) + h^{-1}_*(H_2) + D_1 + \cdots + D_5$ is the same as $D$ in Example 3.2, where we consider $F$ (resp.\ $H_1$, $H_2$) as $h^{-1}_*(F)$ (resp.\ $h^{-1}_*(H_1)$, $h^{-1}_*(H_2)$), we infer from Lemma 3.4 that:
\begin{itemize}
\item[(1)]
$\lkd{S} = 1$.
\item[(2)]
$D^{\#} = H + \frac{2}{3} (D_1 + D_2) + \frac{1}{3} D_3 + \frac{1}{2} (D_4 + D_5)$ and the $\Q$-divisor $K_V + D^{\#}$ is the nef part of the Zariski decomposition of $K_V+D$. 
\item[(3)]
$\lfloor 7 (K_V + D^{\#}) \rfloor$ does not induce a $\BP^1$-fibration on $V$.
\end{itemize}
This proves Proposition 4.1. 
\end{proof}

Finally, we prove the following result.

\begin{prop}
For the smooth affine surfaces of $\ol{\kappa} = 1$ with $\A^1$-fibrations, we have $M = 6$ {\rm (}in Question in Section {\rm 1}{\rm )} and $6$ is the best possible.
\end{prop}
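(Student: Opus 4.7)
The plan has two parts: first to show that $M = 6$ suffices, and then to construct an example demonstrating that $6$ is sharp.

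For the upper bound, Remark 2.8 and Theorem 2.7 (3) imply that any smooth affine surface of $\ol{\kappa} = 1$ with an $\A^1$-fibration must have $\ch k = 2$, $H$ irreducible, $\pi|_H$ purely inseparable of degree $2$, and $t = 1 - g(B)$. Consulting the case analysis of Section 3 restricted to this profile, only Cases 2-2 (giving $m \geq 5$, worst at $g(B) = 2$, $s = 0$), 4-3 (giving $m \geq 6$, worst at $s = 1$ where $d_1 = 1/2$), and 6-3 (giving $m \geq 4$) can occur. Hence $M = 6$ suffices, the bound being realized in Case 4-3.

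For sharpness, I construct an example realizing Case 4-3 with $s = 1$. Take $\ch k = 2$, $B$ an elliptic curve, and a relatively minimal $\BP^1$-fibration $\pi \colon W \to B$ together with a smooth irreducible $2$-section $H$ such that $\pi|_H$ is purely inseparable of degree $2$; such $(W, H)$ exists in characteristic $2$ via a Frobenius-type construction on an appropriate $\BP^1$-bundle over $B$ (for instance, $W = \BP(F_* \SO_B)$ with $F$ the Frobenius of $B$). Fix a fiber $F_1$ of $\pi$ meeting $H$ at a single tangent point $P_1$ of intersection multiplicity $2$, and let $h \colon V \to W$ be the composite of the blowup at $P_1$ followed by the blowup at the common infinitely near point on the first exceptional divisor through which both proper transforms pass. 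Label $h^*(F_1)_{\red} = D_1 + E_1 + D_2$ as a chain with $D_1, D_2$ the $(-2)$-curves and $E_1$ the central $(-1)$-curve, and set $D = H + D_1 + E_1 + D_2$ and $S = V \setminus \Supp D$. Affineness of $S$ is established by producing an ample $\Q$-divisor supported on $\Supp D$ via Nakai--Moishezon, in parallel with the proof of Proposition 4.1.

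For the Zariski decomposition, since $H$ is elliptic, the only admissible rational twigs in $D$ are the single-component twigs $D_1$ and $D_2$; the bark equations give $\Bk(D) = \tfrac{1}{2}(D_1 + D_2)$, so
\[ D^{\#} = H + E_1 + \tfrac{1}{2}(D_1 + D_2), \]
and $K_V + D^{\#} = h^*(K_W + C)$ with $C = H + \tfrac{1}{2} F_1$. By Theorem 2.7 (3) together with $K_B \sim 0$, $K_W + C \sim_{\Q} \pi^* \delta + \tfrac{1}{2} F_1$ with $\deg \delta = 0$; arranging the example so that $\delta \sim 0$, I obtain $\lfloor 5(K_V + D^{\#}) \rfloor \sim 2 h^* F_1 + E_1$. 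The corresponding base divisor is $\delta_5 = 5\delta + 2 P_1$ of degree $2 < 3 = 2 g(B) + 1$, so $|\delta_5|$ fails to embed the elliptic curve $B$ into projective space. Concretely, by a short exact sequence argument and Riemann--Roch, $h^0(V, 2 h^* F_1 + E_1) = h^0(B, 2 P_1) = 2$, so the linear system defines a rational map $V \dashrightarrow \BP^1$ factoring as $V \xrightarrow{\pi \circ h} B \to \BP^1$ whose second factor has degree $2$. Its general fiber is a disjoint union of two fibers of $\pi \circ h$, hence disconnected, so the map is not a $\BP^1$-fibration onto a smooth projective curve.

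The main obstacle will be producing a smooth $(W, H)$ in characteristic $2$: the naive construction $H = \{y^2 = f(b)\}$ on $W = B \times \BP^1$ yields a curve that is singular at zeros of the $1$-form $df$, so one must pass to a non-split $\BP^1$-bundle and construct $H$ via a Frobenius-type cover. Once $(W, H)$ is in place, the remaining verifications closely parallel Example 3.2 and Lemma 3.4.
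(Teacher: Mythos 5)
Your proposal is correct and follows essentially the same route as the paper: the bound $M=6$ comes from restricting the Section 3 case analysis to the purely inseparable profile $t=1-g(B)$ (Cases 2-2, 4-3, 6-3, with 4-3 giving the worst bound), and sharpness is witnessed by the Frobenius pair $(W,H)$ with $W=\BP(\phi_*\SO_B)$ over an elliptic curve $B$, one fiber carrying $d_1=\tfrac12$ and $h^*(F_1)_{\red}=[2,1,2]$. The only (immaterial) inaccuracy is the suggestion that one can arrange $\delta\sim 0$: in the Frobenius pair $K_W+H=\pi^*\delta$ is a \emph{nontrivial} $2$-torsion class (since $|K_W+H|=\emptyset$), but as $\deg(\delta+2\pi(F_1))=2$ on the elliptic base the system $|\lfloor 5(K_V+D^{\#})\rfloor|$ still factors through a degree-$2$ map $B\to\BP^1$ with disconnected fibers, so your conclusion stands.
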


\begin{proof}
In our setting, $\ch(k) = 2$. As seen from the arguments as in Section 3, we know that $M = 6$. We construct a smooth affine surface $S$ of $\lkd{S} = 1$ with an $\A^1$-fibration for which the number $6$ is best possible.

Let $B$ be an elliptic curve. We construct a Frobenius pair $(W,H)$ over $B$ as in \cite[2.1 and 2.2]{M82} (see also \cite[Example 2.2]{K17}). Let $\phi: B \to B$ be the absolute Frobenius morphism. Then we have an exact sequence
$$
0 \to \SO_B \to \phi_*\SO_B \to \SL \to 0,
$$
where $\phi_*\SO_B$ is a vector bundle of rank two over $B$ and $\SL$ is an invertible sheaf of $\deg \SL = 0$. 
By \cite[Lemmas 2.4 and 2.6]{M82}, $2 \SL \sim 0$. The vector bundle $\phi_*\SO_B$ defines a $\BP^1$-bundle $\pi: W:= \BP(\phi_*\SO_B) \to B$ and the surjection $\phi_*\SO_B \to \SL$ defines a section $M$. Moreover, the $\SO_B$-algebra $\phi_*\SO_B$ defines a smooth projective curve $H$ on $W$ such that $\pi|_{H}: H \to B$ is identified with $F: B \to B$ (cf.\ \cite[2.1]{M82}). The pair $(W, H)$ is called the Frobenius pair over $B$.
By \cite[Lemmas 2.4 and 2.6]{M82}, $|K_W + H| = \emptyset$ and $2(K_W + H) \sim 0$. In particular, $K_W + H \equiv 0$. Let $F_1$ be a fiber of the ruling $\pi: W \to B$. We set $d_1 = \frac{1}{2}$ and $C = H + d_1 F_1 = H + \frac{1}{2} F_1$. It is clear that $\# H \cap F_1 = 1$. Let $h : V \to W$ be a composition of blowing-ups at $P = H \cap F_1$ and its infinitely near points such that $h^*(F_1)_{\red} = [2,1,2]$. Then $h^*(F_1) = D_1 + 2E_1 + D_2$, where $D_1 = h^{-1}_*(F_1)$, $D_2$ is a $(-2)$-curve, $E_1$ is a $(-1)$-curve,  and $D_1 \cdot E_1 = E_1 \cdot D_2 = 1$. Set $D = h^{-1}_*(H) + D_1 + E_1 + D_2$. Then $D$ is an SNC-divisor. Set $S = V \setminus \Supp D = W \setminus (H \cup F_1)$. Then $\pi|_{S} : S \to B \setminus \pi(F_1)$ is an $\A^1$-fibration over $B \setminus \pi(F_1)$.
Since $H$ is a $2$-section of $\pi$ and $F_1$ is a fiber of $\pi$, a divisor $H + \alpha F_1$ is ample for sufficiently large $\alpha$ by \cite[Propositions V.2.20 (b) and V.2.21(b)]{GTM52}. So $S$ is affine. 

It is easy to see that $D^{\#} = h^{-1}_*(H) + E_1 + \frac{1}{2}(D_1 + D_2)$. Since $K_W + H \equiv 0$ and by the construction of $h$, we know that $\lkd{S} = 1$ and $K_V + D^{\#} = h^*( K_W + C)$ is the nef part of the Zariski decomposition of $K_V+D$. Finally, we have
$$
\lfloor 5 (K_V + D^{\#}) \rfloor \sim \lfloor \frac{5}{2} F_1 \rfloor = 2 F_1 = \pi^*(2 \pi (F_1)).
$$
Since $2 \pi (F_1)$ is not very ample, $\lfloor 5 (K_V + D^{\#}) \rfloor $ does not induce a $\BP^1$-fibration. This proves Proposition 4.2.
\end{proof}

\end{document}